\newcommand{\pmat}[1]{\begin{pmatrix} #1 \end{pmatrix}}
\newcommand{\beq} {\begin{equation}}
\newcommand{\eeq} {\end{equation}}
\newcommand{\bdm} {\begin{displaymath}}
\newcommand{\edm} {\end{displaymath}}
\newcommand{\bit}{\begin{itemize}}
\newcommand{\eit}{\end{itemize}}
\newcommand{\bde}{\begin{description}}
\newcommand{\ede}{\end{description}}
\newcommand{\ben}{\begin{enumerate}}
\newcommand{\een}{\end{enumerate}}
\newcommand{\algn}[1]{\begin{align} #1 \end{align}}
\newcommand{\algns}[1]{\begin{align*} #1 \end{align*}}
\newcommand{\mltln}[1]{\begin{multline} #1 \end{multline}}
\newcommand{\mltlns}[1]{\begin{multline*} #1 \end{multline*}}
\newcommand{\gats}[1]{\begin{gather*} #1 \end{gather*}}
\newcommand{\barr}{\begin{array}}
\newcommand{\earr}{\end{array}}
\newcommand{\subeqns}[2]{\begin{subequations} \label{#1} \algn{#2} \end{subequations}}
\newcommand{\half} {\ensuremath{\frac{1}{2}}}
\newcommand{\mc}[1]{\mathcal{#1}}
\newcommand{\LRp}[1]{\left( #1 \right)}
\newcommand{\LRa}[1]{\left< #1 \right>}
\newcommand{\LRc}[1]{\left\{ #1 \right\}}
\newcommand{\jump}[1] {\ensuremath{\left[\!\left[{#1}\right]\!\right]}}
\newcommand{\avg}[1] {\ensuremath{\left\{\!\!\left\{{#1}\right\}\!\!\right\}}}
\newcommand{\ra}{\rightarrow}
\newcommand{\e}{\epsilon}
\newcommand{\R}{{\mathbb R}}
\newcommand{\X}{{\mathbb X}}
\newcommand{\I}{\Bbb{I}}
\renewcommand{\div}{\operatorname{div}}
\newcommand{\esssup}{\operatorname{esssup}}
\newcommand{\bs}{\boldsymbol}
\newcommand{\pd}{\partial}
\newcommand{\nor}[1]{\left\Vert#1\right\Vert}
\newcommand{\norw}[2]{\left\Vert#1\right\Vert_{#2}}
\newcommand{\n}{\bs{n}}
\newtheorem{theorem}{Theorem}[section]
\newtheorem{lemma}[theorem]{Lemma}
\newtheorem{rmk}[theorem]{Remark}
\newcommand{\deui}{\dot{e}_{\bs{u}}^I}
\newcommand{\deuh}{\dot{e}_{\bs{u}}^h}
\newcommand{\depfi}{\dot{e}_{\pf}^I}
\newcommand{\dpf}{\dot{p}}
\newcommand{\dpfh}{\dot{p}_{h}}
\newcommand{\ddt}{\frac{d}{dt}}
\newcommand{\dt}{\Delta t}
\newcommand{\epf}{e_{\pf}}
\newcommand{\epfi}{e_{\pf}^I}
\newcommand{\epfh}{e_{\pf}^h}
\newcommand{\eu}{e_{\bs{u}}}
\newcommand{\euh}{e_{\bs{u}}^h}
\newcommand{\fb}{\bs{f}}
\newcommand{\kapb}{\ul{\bs{\kappa}}}
\newcommand{\laminv}{\lambda^{-1}}
\newcommand{\pf}{p}
\newcommand{\qf}{q}
\newcommand{\Qf}{Q}
\newcommand{\Qh}{Q_h}
\newcommand{\ub}{\bs{u}}
\newcommand{\ubh}{\bs{u}_{h}}
\newcommand{\ul}{\underline}
\newcommand{\vb}{\bs{v}}
\newcommand{\Vb}{\bs{V}}
\newcommand{\Vbh}{\bs{V}_{h}}
\newcommand{\wb}{\bs{w}}
\begin{document}
%------------------------------

\title[Coupling nonconforming and EG methods for poroelasticity]{Coupling nonconforming and enriched Galerkin methods for robust discretization and fast solvers of poroelasticity problems }
%
%------------------------------
\author{Jeonghun J. Lee$^1$, Jacob Moore$^2$ }
%\author{Jeonghun J. Lee\footnote{Jeonghun J. Lee is partially supported by Baylor's Undergraduate Research Council grant, Baylor's start-up grant, and National Science Foundation (DMS-2110781)}
%, Jacob Moore\footnote{Jacob Moore is partially supported by Baylor's Undergraduate Research Council grant and National Science Foundation (DMS-2110781)} }

%\date{September 2021}
\address{$^{1,2}$ Department of Mathematics, Baylor University, Waco, Texas, USA  }

\email{$^1$jeonghun\_lee@baylor.edu, $^2$jacob\_moore5@baylor.edu}

\subjclass[2000]{Primary: 65N12, 65N15}

\begin{abstract}
	In this paper we propose a new finite element discretization for the two-field formulation of poroelasticity which uses the elastic displacement and the pore pressure as primary variables. The main goal is to develop a numerical method with small problem sizes which still achieve key features such as parameter-robustness, local mass conservation, and robust preconditionor construction. For this we combine a nonconforming finite element and the interior over-stabilized enriched Galerkin methods with a suitable stabilization term. Robust a priori error estimates and parameter-robust preconditioner construction are proved, and numerical results illustrate our theoretical findings.
\end{abstract}

\keywords{Biot's consolidation model, error analysis, preconditioning}
\date{April, 2023}

\maketitle

\section{Introduction}

The poroelasticity theory is a theory on fluid flows in deformable porous media. The interactive behavior of fluid flows and elastic deformation results in a system of partial differential equations coupling elasticity and porous media flow equations.
Since the first poroelasticity models were derived by Biot \cite{Biot-low-frequency,Biot-high-frequency}, mathematical properties and numerical methods for the models have been widely explored. We refer to \cite{Zenisek:1984,Showalter:2000} for well-posedness of linear quasi-static poroelasticity problems. For numerical methods there are a number of previous results with various formulations and finite element spaces \cite{Santos:1986a,Santos:1986b,Zienkiewicz-Shiomi:1984,Murad:1996,Phillips:2009, Phillips:2008,Phillips:2009, Yi:2013,Yi:2014}. 
The main focus of more recent studies is developing 
%In more recent studies, the main interests are developing 
numerical methods which provide good features of numerical solutions. The features of interest are robustness of numerical solutions for nearly incompressible solid matrix and saturating fluids, local mass conservation, and fast solvers which show robust performance for realistic physical parameter ranges. Here we list some recent results
\cite{Yi:2017,JJLee:2016,Lee:2017,Rodrigo:2016,Hu:2017,Boffi:2016,JJLee:2017,Oyarzua:2016,Feng:2018,Kanschat:2018,Hong:2018,Riviere:2017,Fu:2018-biot,Kraus:2021,Botti:2021,Lee-Yi:2023} but we admit that this list of references is by no means complete.

The goal of this paper is to develop a numerical method for poroelasticity with small problem sizes such that numerical solutions satisfy the three features: 1) the error bounds are robust for a nearly incompressible elastic matrix, for arbitrarily small constrained specific storage coefficients, for time interval, 2) local mass conservation of numerical solutions holds, 3) preconditioners robust for mesh refinements as well as the physical parameter values are available.
For this goal we use a formulation which has the displacement of poroelastic matrix $\ub$ and the fluid pressure $p$ as unknowns. 

We remark that there are a couple of previous works on robust numerical methods using the two-field formulation. In \cite{Fu:2018-biot}, primal hybridized discontinuous Galerkin methods are used for the displacement and pressure, and it is proved that the methods give robust numerical solutions for nearly incompressible materials. In \cite{Lee-Yi:2023}, the enriched Galerkin method for linear elasticty  in \cite{Yi-Lee-Zikatanov:2022} and the enriched Galerkin method for the Poisson equations in \cite{Lee-Lee-Wheeler-16} are coupled. To the best of our knowledge, construction of parameter-robust preconditioning for these methods are not available. In contrast, we show that the numerical method in this paper provides parameter-robust preconditioner construction in addition to robust error estimates and local mass conservation. 

For finite element discretization of the two variables we use the Mardal--Tai--Winther (MTW) element, a nonconforming $H^1$ finite element developed in \cite{Mardal-Tai-Winther:2002} for $\ub$ and the interior over-stabilized enriched Galerkin method for the pore pressure $p$ in \cite{Lee-Ghattas}. For this combination of discretization methods, we prove the a priori error estimates for semidiscrete solutions without Gr\"onwall inequalities and show a preconditioner construction guidance with the operator preconditioning approach in \cite{Mardal-Winther:2011}.

The paper is organized as follows. In Section~\ref{sec:prelim} we introduce symbols, definitions, and the partial differential equation formulation of Biot's consolidation model. In Section~\ref{sec:discretization}, finite element methods for spatial discretization and auxiliary results for the stability of discretization are presented. The a priori error analysis of semidiscrete solutions is proved in Section~\ref{sec:error-analysis}. An abstract form of parameter-robust preconditioners  is proposed in Section~\ref{sec:preconditioning}, and numerical results for convergence of errors and for efficiency of preconditioners are presented in Section~\ref{sec:numerics}.
Conclusive remarks are given in Section~\ref{sec:conclusion}. 

\section{Preliminaries} \label{sec:prelim}

\subsection{Notations}
Let $\Omega$ be a bounded domain in $\R^n$, $n=2,3$ with a polygonal or polyhedral boundary. For a nonnegative real number $m$, $H^m (\Omega)$ and $H^m(\Omega; \R^n)$ denote the standard $\R$ and $\R^n$-valued Sobolev spaces based on the $L^2$ norm. For a Banach space $\mc{X}$ and $(a, b) \subset \R$, $C^0 (a, b; \mc{X})$ is the set of functions $f : (a, b) \ra \mc{X}$ which are continuous in $t \in (a,b)$. For an integer $m \geq 1$ we define 
\begin{align*}
C^m (a, b ; \mc{X}) := \{ f \, | \, \pd^{i}f/\pd t^{i} \in C^0(a, b\,;\mc{X}), \, 0 \leq i \leq m \},
\end{align*}
where $\pd f/\pd t$ is the Fr\'echet derivative of $f$ (see e.g., \cite{Yosida-book}). Recall the Bochner spaces by the norm
\begin{align*}
\| f \|_{L^p(a,b; \mc{X})} = 
\begin{cases}
\left( \int_a^b \| f \|_{\mc{X}}^p ds \right)^{1/p}, \quad 1 \leq p < \infty, \\
\esssup_{t \in (a,b)} \| f \|_{\mc{X}}, \quad p = \infty.
\end{cases}
\end{align*}
%
%Note that the above definition can be employed to the semi-normed spaces and $|f|_{L^p([a,b]; \dot H^m)}$ is defined similarly as above by using $|f|_m$ instead of $\| f \|_X$. 
If a time interval $J \subset \R $ is clear in context, then we use $L^p \mc{X}$ to denote $L^p(J; \mc{X})$ for simplicity. 
We define the Bochner-Sobolev spaces $W^{k,p}(J; \mc{X})$ for nonnegative integer $k$ and $1 \leq p \leq \infty$ as the closure of $C^k (J; \mc{X})$ with the norm $\| f \|_{W^{k,p} \mc{X}} = \sum_{i=0}^k \| \pd^i f / \pd t^i \|_{L^p \mc{X}}$. 
We adopt the convention $\| f, g \|_\mc{X} = \| f \|_\mc{X} + \| g \|_\mc{X}$ for simplicity, and $\dot{u}$, $\ddot{u}$ is used to denote $\partial u / \partial t$, $\partial^2 u/ \partial t^2$. %Similarly, $\ddot{\sigma}$, $\dddot{\sigma}$ are used to denote ${\pd^2 \sigma}/ { \pd t^2}$, ${\pd^3 \sigma}/ {\pd t^3}$, respectively. 

In this paper we use $\mc{T}_h$ to denote a shape-regular triangulation of $\Omega$ for which $h$ is the maximum diameter of triangles/tetrahedra and $\mathcal{E}_h$ is the corresponding set of edges/faces. For a set $G$ with a positive $n$-dimensional Lebesgue measure and for functions $f, g \in L^2(\Omega)$ we define $\LRp{ f, {g} }_G$ by
\algns{
\LRp{ {f}, {g} }_G := \int_G {f} \cdot {g} \,dx .
}
For a finite-dimensional vector space $\X$ on $\R$ and functions $f, g \in L^2(\Omega;\X)$, $\LRp{ {f}, {g} }_G$ is similarly defined using the natural inner product on $\X$. 
For $e \in \mathcal{E}_h$ and functions ${f}, {g} \in L^2(e)$ we define
\algns{
\langle {f}, {g} \rangle_e = \int_e {f} \cdot {g} \,ds, \qquad \langle {f}, {g} \rangle = \sum_{e \in \mathcal{E}_h} \langle {f}, {g} \rangle_e. 
}
$\LRa{f, g}_e$ and $\LRa{f, g}$ are defined similarly for $f, g \in L^2(e; \X)$. 

For an integer $k \geq 0$ and for each $K \in \mc{T}_h$, $\mc{P}_k(K)$ is the space of polynomials of degree $\le k$ on $K$, and $\mc{P}_k(\mc{T}_h)$ denotes the space 
\algns{
\mc{P}_k(\mc{T}_h) = 
  \LRc{q \in L^2(\Omega) \;:\; q|_K \in \mc{P}_k(T), \; K \in \mc{T}_h } .
}
For a vector space $\X$, we use $\mathcal{P}_k(G; \X)$ and $\mathcal{P}_k(\mathcal{T}_h; \X)$ to denote the space of $\X$-valued polynomials with same conditions. 

For a nonnegative function (or a positive semidefinite tensor) $w$ on $\Omega$, $\nor{q }_{0,w}$ denotes $\nor{q }_{0,w} = \LRp{w q, q}_{\Omega}^{1/2}$.

\subsection{The Biot's consolidation model}
%In this section we review the Biot's consolidation model in poroelasticity. 
%In an elastic porous medium, fluid flow and deformation of porous medium are intimately related when permeability of the medium is small, i.e., deformation of the medium and retardation of fluid flow occur simultaneously. Biot's consolidation model describes this phenomena when the porous medium is fully saturated. 
Throughout this paper we limit our interest on quasistatic consolidation problems.
%In other words, we assume that consolidation process is slow and acceleration term is ignored. 
In our formulation of the model, $\bs{u}$ is the displacement of porous media, $p$ is the pore pressure, $\bs{f}$ is the body force, $g$ is the mass change rate of fluid. The governing equations of the quasistatic consolidation model with an isotropic elastic porous medium are 
\subeqns{eq:strong-eqs}{
\label{eq:strong-eq1}-\div \LRp{ 2 \mu \e(\bs{u}) + (\lambda \div \ub - \alpha p) \I } &= \bs{f}, \\
\label{eq:strong-eq2} s_0 \dot{p} + \alpha \div \dot{\bs{u}} - \div (\ul{\bs{\kappa}} \nabla p) &= g, 
}
where $\mu$ and $\lambda$ are the Lam\'e coefficients, $s_0 \geq 0 $ is the constrained specific storage coefficient, $\underline{\bs{\kappa}}$ is the hydraulic conductivity tensor, $\alpha>0$ is the Biot--Willis coefficient which is close to 1, and $\Bbb{I}$ is the identity matrix. 

We assume that $\mu$ is uniformly bounded above and below with positive constants. We assume $\lambda$ has a uniformly positive lower bound but $\lambda$ may not have a uniform upper bound and $\lambda = +\infty$ corresponds to the incompressibility of the solid matrix. 
We assume that there are constants $c_0, c_1$ such that 
\algns{
  0 \le c_0 \le s_0 (x) \le c_1 , \qquad x \in \Omega.
}
We remark that $s_0$ is related to $\alpha$, the porosity $\phi$, and the bulk moduli of the solid and fluid. 
Under the assumption that $\phi$ is uniform with $0 < \phi < \alpha$, if the solid is not incompressible, then $s_0 \ge C/\lambda$ holds with a constant $C$ of scale 1. However, $s_0$ may vanish on a subdomain if $\lambda = + \infty$ on the subdomain and the fluid is incompressible. The hydraulic conductivity tensor $\ul{\bs{\kappa}} = \ul{\bs{\kappa}}(x)$ is positive definite with uniform lower and upper bounds $\kappa_0, \kappa_1 >0$, i.e., 
\algns{
\kappa_{0} | \xi |^2 \le \xi^T \ul{\bs{\kappa}}(x) \xi  \le \kappa_{1} | \xi |^2 , \qquad \forall \;0 \not = \xi \in \R^n,\quad  \text{a.e.} \; x \in \Omega .
}
On details of deriving these equations from physical modelling, we refer to standard references on poroelasticity theory, e.g., \cite{Wang-poroelasticity-book}. % [Coussy, Wang, Cheng]. 

For well-posedness of the problem, the equations \eqref{eq:strong-eqs} need appropriate boundary and initial conditions. We assume that there are partitions of $\pd \Omega$ which are 
\begin{align*}
\pd \Omega = \Gamma_p \cup \Gamma_f, \qquad \pd \Omega = \Gamma_d \cup \Gamma_t, \qquad | \Gamma_d |, |\Gamma_p| > 0
\end{align*}
where $| \Gamma |$ is the $(n-1)$-dimensional Lebesgue measure of $\Gamma$. We also assume that boundary conditions are given as
\begin{align}
\label{eq:bc}  p(t) = 0 \text{ on } \Gamma_p, \quad - \ul{\bs{\kappa}} \nabla p(t) \cdot \bs{n} = 0 \text{ on } \Gamma_f, \quad \bs{u}(t) = 0 \text{ on } \Gamma_d, \quad \underline{\bs{\sigma}}(t) \bs{n} = 0 \text{ on } \Gamma_t,
\end{align}
for all $t \in (0, T]$ where $\bs{n}$ is the outward unit normal vector field on $\pd \Omega$ and $\underline{\bs{\sigma}} := 2 \mu \e(\bs{u}) + (\lambda \div \ub - \alpha p) \I$, the Cauchy stress tensor.
Here we only consider the homogeneous boundary condition for simplicity but our method can be easily extended to problems with nonhomogeneous boundary conditions. We also assume that given initial data $p(0), \bs{u}(0)$ and $\bs{f}(0)$ satisfy the compatibility condition \eqref{eq:strong-eq1}.
Well-posedness of this system under these assumptions can be found in \cite{Showalter2000}. 

%Finally, we remark that the conditions $| \Gamma_d |, |\Gamma_p| > 0$ are assumed here for simplicity of analysis in the rest of the paper. 

%When we claim a priori error estimates we assume that exact solutions are sufficiently regular to obtain the error bounds. 
%In general, $s_0 \geq 0$ is not necessarily uniformly positive and $\lambda$ is not uniformly bounded. 
%because we are mainly concerned with robust error bounds for general $s_0 \geq 0$. In previous studies it is observed that the poroelasticity locking occurs when the $s_0$, the hydraulic conductivity, and time step are very small.

\section{Finite element discretization} \label{sec:discretization}
In this section we present a variational formulation and finite element discretization of \eqref{eq:strong-eqs}, and prove auxiliary results for the stability and error analyses.

\subsection{Variational formulation of continuous problem}
Let us define function spaces 
\algns{ 
\Vb = \LRc{ \vb \in {H}^1 (\Omega; \R^n) \;:\; \vb|_{\Gamma_d} = 0 }, \quad 
\Qf = \{ q \in H^1(\Omega) \;:\; q|_{\Gamma_p} = 0 \} .
}
Then, by the integration by parts, we can show that a solution of \eqref{eq:strong-eqs}, $(\bs{u}(t), p(t))$, satisfies the following variational formulation: 
\subeqns{eq:weak-up-eqs}{
\label{eq:weak-up-eq1} \LRp{2 \mu \e(\bs{u}(t)), \e(\bs{v}) }_{\Omega} + \LRp{ \lambda \div \bs{u}(t), \div \bs{v}}_{\Omega} - \LRp{ \alpha \pf(t) , \div \bs{v} }_{\Omega} &= \LRp{ \bs{f}(t), \bs{v} }_{\Omega},  \\ %& & \forall \bs{v} \in \bs{V}, \\
\label{eq:weak-up-eq2} - \LRp{ \alpha \div \dot{\bs{u}}(t) , \qf }_{\Omega} - \LRp{ s_0 \dpf(t) , \qf }_{\Omega} - \LRp{ \kapb \nabla \pf(t) , \nabla \qf }_{\Omega}  &= -\LRp{ g(t), \qf }_{\Omega} %& & \forall \qf \in \Qf 
}
for all $(\bs{v}, \qf) \in \Vb \times \Qf$ and for all $t \ge 0$. 

For well-posedness we need initial conditions $(\bs{u}(0), \pf(0)) \in \Vb \times \Qf$. By the compatibility assumption,  
% \eqref{eq:weak-up-eq1} is an algebraic equation we also assume that $(\bs{u}(0), \pf(0))$ satisfies the compatibility condition 
%
%({\bf VP}) For initial data $(\ub(0), \pf(0)) \in \Vb \times \Qf$ satisfying %the compatibility conditions 
\algn{ \label{eq:comp-cond} 
\LRp{2 \mu \e(\bs{u} (0)), \e(\bs{v}) }_{\Omega} + \LRp{ \lambda \div \bs{u}(0), \div \bs{v}}_{\Omega} - \LRp{ \alpha \pf (0), \div \bs{v} }_{\Omega} &= \LRp{ \bs{f} (0), \bs{v} }_{\Omega}  
}
for all $\vb \in \Vb$.

In summary the variational problem of \eqref{eq:strong-eqs} is the following: For given initial condition $(\bs{u}(0), \pf(0)) \in \Vb \times \Qf$ satisfying \eqref{eq:comp-cond} find $(\ub, \pf) \in C^1(0,T; \Vb) \times C^1(0,T; \Qf)$ such that \eqref{eq:weak-up-eqs} holds for all $t \ge 0$.

\subsection{Finite elements for spatial discretization}
In this subsection we introduce finite element methods for spatial discretizations of $\ub$ and $\pf$.

For discretization of $\ub$ we use the vector-valued nonconforming $H^1$ elements by Mardal, Tai, Winther (\cite{Mardal-Tai-Winther:2002,Tai-Winther:2006}). Here we only summarize key features of the finite elements, and we refer to the original articles for detailed definitions of the finite elements. %For precise definitions we refer to the original papers and here we only summarize the features which are necessary for our error analysis.

Let $\Vb_h$ be the subspace of the Mardal-Tai-Winther element with an additional condition that  
\begin{align} \label{eq:sigma-property5}
	\text{all DOFs associated to the edges/faces in } \Gamma_d \text{ vanish}.
\end{align}
By definition one can see that $\Vb_h \subset H(\div, \Omega)$ and $\div \Vbh = \mc{P}_0(\mc{T}_h)$ but $\Vb_h \not \subset H^1(\Omega; \R^n)$. If we define $\| \vb \|_{1,h}$ by $\| \bs{v} \|_{1,h}^2 := \| \nabla \vb \|_{0}^2 + \| \vb \|_0^2$ with $\nabla$ and $\e$, by the element-wise gradient and symmetric gradient operators, then a discrete Korn inequality 
\algns{
  \| \bs{v} \|_{1,h} \ge C \| \e(\bs{v}) \|_{0}
}
holds for $C>0$ independent of $h$ (see~\cite{Mardal-Winther:2006}). In addition, an inf-sup condition
\algn{ \label{eq:MTW-inf-sup}
  \inf_{q \in \mc{P}_0(\mc{T}_h)} \sup_{\vb \in \Vbh} \frac{\LRp{ \div \vb, q}_{\Omega} }{ \| \vb \|_{1,h} \| q \|_0 } \ge C >0
}
holds with $C$ independent of $h$.

For discretization of $\pf$ we use the interior over-stabilized enriched Galerkin methods developed in \cite{Lee-Ghattas}. We first define $\Qh^c$, $\Qh^0$, $\Qh$ by
\algns{
  \Qh^c := \mc{P}_1(\mc{T}_h) \cap H^1(\Omega), \quad \Qh^0 := \mc{P}_0(\mc{T}_h) , \quad \Qh := \Qh^c + \Qh^0 .
}

%
% Using \eqref{eq:weak-up-eq}, we will derive error estimates for a given time step and show its error is bounded for the semi-discrete method. Let $\Vb_h$ be the Mardal-Tai-Winther element, a nonconforming $H^1$ vector-valued element, in \cite{MR1950614, MR2283095}, and $Q_h$ be
% %
% \algns{
%   \Qh  = \mc{P}_1^c(\mc{T}_h) + \mc{P}_0(\mc{T}_h) .
% }
% %

Let use define $a_{\ub}: (\Vb + \Vb_h) \times (\Vb + \Vb_h) \rightarrow \mathbb{R}$ and $\| \cdot \|_{\Vb_h}$ by 
\algn{
\label{eq:a_u-form} a_{\ub} \LRp{ \vb, \tilde{\vb}} &:= \LRp{2 \mu \e(\vb), \e(\tilde{\vb}) } + \LRp{ \lambda \div \vb, \div \tilde{\vb}} , \qquad \| \vb \|_{\Vb_h} = \LRp{a_{\ub}\LRp{\vb, \vb} }^{1/2} . 
}
%
% Note that we can extend $a_{\ub}$ to $\Vb_h$ by taking $\e$ as the element-wise symmetric gradient operator because $\Vb_h$ is a nonconforming $H^1$ element with enough interelement continuity. In the discussions below we will use $a_{\ub}$ without distinguishing the original bilinear form on $\Vb$ and its extension on $\Vb_h$.
To define the bilinear form for $\pf$ with the interior over-stabilized enriched Galerkin method, we define the jump and average operators for functions which have well-defined traces on edges/faces. More specifically, for $e \in \mc{E}_h^{\pd}$, we define
\algns{
\jump{q}|_e = q|_e \n , \quad \avg{q}|_e = q|_e, \qquad \jump{\bs{q}}|_e = \bs{q}|_e \cdot \n, \quad \avg{\bs{q}}|_e = \bs{q}|_e 
}
where $q$ and $\bs{q}$ are scalar and $\R^n$-vector-valued functions such that their traces on edge/face $e$ are well-defined as integrable functions, and $\n$ is the outward unit normal vector field on $e$. For $e \in \mc{E}_h^0$, $T_+$ and $T_-$ are the two elements such that $e = \pd T_+ \cap \pd T_-$. If $q$ is a scalar function on $T_+ \cup T_-$, then we use $q^{\pm}$ and $\nabla q^{\pm}$ to denote the restrictions of $q$ and $\nabla q$ on $T_+$ and $T_-$. We use $\n^{\pm}$ to denote the unit outward normal vector fields of $T_\pm$ and we again omit the restriction $|_e$ if it is clear in context. Then the jumps and the averages of $q$ and $\bs{q}$ on $e \in \mc{E}_h^i$ are defined by 
\algns{
\jump{q}|_e &= q^+ \n^+ + q^- \n^-, & \avg{q}|_e &= \half (q^+ + q^- ), \\
\jump{\bs{q}}|_e &= \bs{q}^+ \cdot \n^+ + \bs{q}^- \cdot \n^- , & \avg{\bs{q}}|_e &= \half (\bs{q}^+ + \bs{q}^- ) .
}
If $e \in \mc{E}_h$ is clear in context, we will use $\jump{q}$ instead of $\jump{q}|_e$ for the jump of $q$ on $e \in \mc{E}_h$. The same simplification will apply to other quantities.
%$\qb^+$ and $\qb^-$ are defined similarly for an $\R^n$-valued function $\qb$.

For $\qf \in \Qh$ we use $\qf^c$ and $\qf^0$ to denote the components in $\Qh^c$ and $\Qh^0$ which are unique up to a constant addition. For $\qf, \tilde{\qf} \in \Qh$ we now define
\mltln{ 
\label{eq:a_p-form} a_p \LRp{\qf, \tilde{\qf}} := \LRp{\kapb \nabla \qf^c, \nabla \tilde{\qf^c}}_{\Omega} - \langle \kapb \avg{\nabla \qf^c}, \jump{\tilde{\qf}^0} \rangle - \langle \jump{\qf^c}, \avg{\kapb \nabla \tilde{\qf}} \rangle \\ 
 + \gamma \kapb_{avg} \sum_{e \in \mc{E}_h^0} \langle h_e^{-1-\beta} \jump{\qf^0}, \jump{\tilde{q}^0} \rangle_e + \gamma \kapb_{avg} \sum_{e \in \mc{E}_h^{\pd}} \langle h_e^{-1} (q^c + q^0), (\tilde{q}^c + \tilde{q}^0) \rangle_e 
}
with $\beta \ge 1$ and a sufficiently large $\gamma >0$. This bilinear form is well-defined independent of the choices of $\qf^c, \tilde{\qf}^c$, $\qf^0$, $\tilde{\qf}^0$ up to constant addition because all terms except the last term in this bilinear form vanishes if $\qf^c, \tilde{\qf}^c$, $\qf^0$, $\tilde{\qf}^0$ are constant functions on $\Omega$, and the last term is independent of the decompositions of $q^c, q^0, \tilde{q}^c, \tilde{q}^0$. Note that there exists a unique decomposition $q = q^c + q^0 \in \Qh^c + \Qh^0$ for $q \in \Qh$ such that $q^0\in \Qh^0$ is mean-value zero on $\Omega$. We will use this decomposition frequently in the rest of this paper for analysis.
If $\gamma>0$ is sufficiently large depending on the shape regularity of the triangulation $\mc{T}_h$, then $a_p(\cdot, \cdot)$ is a coercive bilinear form on $\Qh$. 
Note that \eqref{eq:a_p-form} is the enriched Galerkin (EG) method for second order elliptic equations in \cite{Lee-Lee-Wheeler-16} if $\beta = 0$. In this paper we assume $\beta \ge 1$ which is the interior over-stabilized enriched Galerkin methods (IOS-EG) in \cite{Lee-Ghattas}. 
Convergence rates of numerical solutions are same in EG and IOS-EG methods.
However, as is discussed in \cite{Lee-Ghattas}, IOS-EG methods provide more robust fast solver algorithms. 
In this paper we will also show this by numerical experiments later. % that IOS-EG methods are advantageous in preconditioning of the system. 

We also define a stabilization bilinear form $S : \Qh \times \Qh \ra \R$ by 
\algn{
  S(q, \tilde{q}) = \gamma \LRp{ \laminv \sum_{e \in \mc{E}_h^0} \langle h_e^{-1} \jump{\qf}, \jump{\tilde{q}} \rangle_e + \sum_{e \in \mc{E}_h^0} \langle C_1 h_e^{-1} \jump{\qf}, \jump{\tilde{q}} \rangle_e }
}
with a sufficiently large $\gamma >0$. This stabilization term will be used to discretize the time derivative term of $\pf$ in order to guarantee the stability of discretization independent of the lower bound of $s_0 \ge 0$ and robust preconditioning. Note that the edge/face integrals in $S(\cdot, \cdot)$ are defined only on the interior edges/faces $\mc{E}_h^0$, so only mean-value zero components of $q^0$ and $\tilde{q}^0$ contribute to $S(q, \tilde{q})$. We will use this fact in our stability analysis later. 

Let us define the norm $\| \vb \|_{\Vbh}$ for $\vb \in \Vb_h$ by 
\algns{
  \| \vb \|_{\Vbh}^2 := \int_{\Omega} \LRp{ 2 \mu \epsilon(\vb) : \epsilon(\vb) + \lambda (\div \vb)^2 } \, dx 
}
where $\e(\vb) :\e(\vb)$ is the Frogenius inner product on matrices. For $q \in \Qh$ we define $\| q \|_{0,\laminv}$ and $\| q \|_{h}$ by 
\algn{
  \label{eq:q-L2-norm} \| q \|_{0,\laminv}^2 &:=   \int_{\Omega} \laminv q^2 \, dx , \\
  \label{eq:q-nonstandard-norm} \| q \|_{h}^2 &:= \int_{\Omega} \kapb \nabla q^c \cdot \nabla q^c \, dx + \sum_{e \in \mc{E}_h^{0} } \gamma \kapb_{avg} h_e^{-1-\beta} \int_e \jump{q}^2 \, ds \\
  \notag &\quad + \sum_{e \in \mc{E}_h^{\pd} } \gamma \kapb_{avg} h_e^{-1} \int_e \jump{q^c}^2 \, ds + \sum_{e \in \mc{E}_h^{\pd} } \gamma \kapb_{avg} h_e^{-1} \int_e \jump{q^0}^2 \, ds.
}
In the rest of this subsection we introduce a few results for stability of the discretization. These results will be used to prove well-posedness as well as the a priori error analysis of the discretized system.

We begin with defining an auxiliary finite element space $R_h$ and a linear operator $I_h: \Qh^c \ra R_h$. For a vertex $v_i$ of $\mc{T}_h$ let $\chi_i$ be the characteristic function on the support of the shape function of an element in $\Qh^c$ associated to the vertex degree of freedom at $v_i$. Let $R_h$ be the subspace of $\Qh^0$ defined by 
\algns{ 
  R_h = \{ q^0 \in \Qh^0 \,:\, q^0 = \sum_{i=1}^N c_i \chi_i, c_i \in \R \} 
}
where $N$ is the number of vertices of $\mc{T}_h$. We define $I_h : \Qh^c \ra R_h$ by 
\algns{
  I_h q^c = \sum_{i=1}^N q^c(v_i) \chi_i .
}
The following result can be obtained with the same argument in \cite[Lemma~1]{Lamichhane:2014}. 
\begin{lemma} \label{lemma:integral-identity}
  Given $\vb \in \Vbh$ and $q^c \in \Qh^c$ 
  \algn{ \label{eq:integral-identity}
    (n+1) \LRp{ \div \vb, q^c }_{\Omega} = \LRp{ \div \vb, I_h q^c }_{\Omega} .
  }
\end{lemma}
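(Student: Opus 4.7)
The plan is to exploit two simple structural facts: $\div \vb$ is piecewise constant on $\mathcal{T}_h$ for $\vb \in \Vbh$ (a defining property of the Mardal--Tai--Winther space), and on each simplex $K$, the auxiliary function $I_h q^c$ reduces to the sum of the vertex values of $q^c$ at the vertices of $K$. Combined with the fact that the integral of a linear function over a simplex equals the volume times the average of its vertex values (with denominator $n+1$), the identity will drop out.

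First I would reduce both sides of \eqref{eq:integral-identity} to a sum over elements. Since $\div \vb|_K$ is constant on each $K \in \mathcal{T}_h$, we can pull it outside the $K$-integrals and write
\[
(\div \vb, q^c)_\Omega = \sum_{K \in \mathcal{T}_h} (\div \vb)|_K \int_K q^c \, dx, \qquad
(\div \vb, I_h q^c)_\Omega = \sum_{K \in \mathcal{T}_h} (\div \vb)|_K \int_K I_h q^c \, dx.
\]
So it suffices to show that on each $K$,
\[
(n+1)\int_K q^c \, dx = \int_K I_h q^c \, dx.
\]

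Next I would compute each side explicitly. By the definition of $\chi_i$ as the characteristic function of the closed star of the vertex $v_i$, we have $\chi_i|_K = 1$ precisely when $v_i$ is a vertex of $K$, and $\chi_i|_K = 0$ otherwise. Hence
\[
(I_h q^c)|_K = \sum_{v_i \in K} q^c(v_i),
\]
and integrating gives $\int_K I_h q^c \, dx = |K| \sum_{v_i \in K} q^c(v_i)$. On the other hand, since $q^c$ is affine on $K$, its mean over the simplex equals its value at the centroid, which by linearity equals the average of the vertex values; therefore
\[
\int_K q^c \, dx = \frac{|K|}{n+1} \sum_{v_i \in K} q^c(v_i).
\]
Multiplying by $n+1$ and matching with the expression for $\int_K I_h q^c$ gives the elementwise identity, and summing over $K$ yields \eqref{eq:integral-identity}.

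There is no real obstacle here: the lemma is a direct consequence of the piecewise-constancy of $\div \Vbh$ combined with the simplex centroid quadrature for affine functions, and the only thing to verify carefully is the restriction of $\chi_i$ to a single element so that the counting of nonzero contributions in $I_h q^c|_K$ agrees with the $n+1$ vertices of $K$.
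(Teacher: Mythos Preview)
Your proof is correct and is essentially the same argument as the paper's, just spelled out in full: the paper's one-line proof (``check on nodal basis $q^c$'' together with $\div \Vbh = \mc{P}_0(\mc{T}_h)$) amounts precisely to your elementwise verification using the centroid quadrature rule for affine functions and the identification $(I_h q^c)|_K = \sum_{v_i \in K} q^c(v_i)$.
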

\begin{proof}
Since $\div \Vbh = \Qh^0$ we can see that \eqref{eq:integral-identity} holds for any nodal basis $q^c \in \Qh^c$, so the conclusion follows.
\end{proof}
The following norm equivalence result is also proved in \cite{Lamichhane:2014}. The proof follows from a standard scaling argument, so we state the result here without proof.
\begin{lemma} \label{lemma:Ih-equivalence}
  There exist $C_1, C_2>0$ independent of $h$ such that 
  \algns{
    C_1 \| q^c \|_0 \le \| I_h q^c \|_0 \le C_2 \| q^c \|_0, \quad q^c \in \Qh^c.
  }
\end{lemma}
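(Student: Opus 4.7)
The plan is a scaling argument that reduces the bound to a spectral equivalence between two discrete Gram matrices on the vertex-value vector. Expand $q^c = \sum_{i=1}^N a_i \phi_i$ in the continuous $P_1$ nodal basis $\{\phi_i\}\subset\Qh^c$ with $a_i = q^c(v_i)$, so that $I_h q^c = \sum_i a_i \chi_i$ and
\[
\| q^c \|_0^2 = \mathbf{a}^\top M\,\mathbf{a}, \qquad \| I_h q^c \|_0^2 = \mathbf{a}^\top \tilde M\,\mathbf{a},
\]
with $M_{ij} = (\phi_i,\phi_j)_\Omega$ and $\tilde M_{ij} = (\chi_i,\chi_j)_\Omega = |\omega_i \cap \omega_j|$. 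Both $M$ and $\tilde M$ are sparse, with stencil dictated by the shared-simplex incidence of vertices, and the task reduces to showing their spectral equivalence with constants independent of $h$.

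The upper bound is straightforward. On each simplex $K \in \mc{T}_h$, $\chi_j|_K = 1$ exactly when $v_j$ is a vertex of $K$, hence $(I_h q^c)|_K = \sum_{v_j\in K} a_j = (n+1)\,\bar q^c_K$ by linearity of $q^c$ on $K$, where $\bar q^c_K$ is the elementwise mean of $q^c$. Jensen's inequality gives $|K|(\bar q^c_K)^2 \le \|q^c\|_{L^2(K)}^2$, and summation over $K\in\mc{T}_h$ yields $\| I_h q^c \|_0 \le (n+1)\| q^c \|_0$, so $C_2 = n+1$ works.

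For the lower bound I would pass to a reference patch. Shape regularity of $\mc{T}_h$ bounds the cardinality of each vertex patch $\omega_i$ and restricts the possible patch geometries to a compact set modulo affine scaling. On any such reference patch, the local blocks of $M$ and $\tilde M$ are finite symmetric positive definite matrices depending continuously on the reference geometry, and are therefore spectrally equivalent with a uniform constant. Rescaling by $|\omega_i|$ preserves the equivalence constants, and assembling via bounded overlap of patches yields $C_1 \|q^c\|_0 \le \|I_h q^c\|_0$ with $C_1$ depending only on the shape-regularity parameter of $\mc{T}_h$.

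The main obstacle is the positivity of $\tilde M$ after assembly. Its elementwise contribution on a single simplex $K$ is the rank-one matrix $|K|\mathbf{1}\mathbf{1}^\top$ in local vertex coordinates, so $\tilde M$ is not per-element positive definite; positive definiteness must be extracted after assembly over a full patch and controlled uniformly by shape regularity alone. This is the step that relies on the reference macro-patch verification, as established in \cite{Lamichhane:2014}, rather than any purely local per-element estimate.
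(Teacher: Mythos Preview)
Your upper bound argument is correct and clean: since $(I_h q^c)|_K = (n+1)\,\bar q^c_K$, Jensen's inequality gives $C_2 = n+1$. The paper itself offers no proof beyond ``standard scaling argument'' and a citation to \cite{Lamichhane:2014}, so on that side there is nothing more detailed to compare against.

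The lower-bound sketch, however, has a real gap, and it cannot be closed because the inequality $C_1\|q^c\|_0 \le \|I_h q^c\|_0$ is false as stated. Your patchwise scaling argument needs the local $\tilde M$ on a reference vertex patch to be positive definite, but this already fails on a four-triangle fan around an interior vertex: with center value $0$ and alternating boundary values $\pm 1$, each triangle's vertex sum vanishes, so $I_h q^c = 0$ on the patch while $q^c \ne 0$. Globally, on the uniform right-triangle mesh of $[0,1]^2$ with vertices $(i/N,j/N)$, the nonzero function $q^c \in Q_h^c$ with nodal values $q^c(v_{i,j}) = \cos\bigl(2\pi(i+j)/3\bigr)$ has zero mean on every triangle (the three values of $i+j$ at the vertices of any triangle are consecutive integers, hence cover all residues mod $3$), so $I_h q^c \equiv 0$. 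This shape-regular family has $h\to 0$, so no $h$-independent $C_1 > 0$ can exist. You correctly located the obstacle---the rank-one per-element structure of $\tilde M$---but assembly over a macro-patch does not restore positive definiteness; the kernel persists.
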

By combining Lemma~\ref{lemma:integral-identity} and Lemma~\ref{lemma:Ih-equivalence}, we can prove an inf-sup condition with $\lambda$-weighted norms.
\begin{lemma} \label{lemma:aux-inf-sup}
  There exist $C>0$ independent of $h$ such that 
  \algns{
    \inf_{q^c \in \Qh^c} \sup_{\vb \in \Vbh} \frac{\LRp{ \div \vb, q^c }_{\Omega} }{\| \vb \|_{\Vbh} \| q^c \|_{0,\laminv} } \ge C .
  }
\end{lemma}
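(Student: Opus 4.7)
The plan is to exhibit, for each $q^c \in \Qh^c$, a single test function $\vb \in \Vbh$ whose divergence is tailored to the $\lambda^{-1}$-weighted pairing, and then combine the three available tools: the MTW inf-sup \eqref{eq:MTW-inf-sup} (which, together with $\div \Vbh = \mc{P}_0(\mc{T}_h)$, produces a bounded right inverse to $\div$), the integral identity of Lemma~\ref{lemma:integral-identity}, and the norm equivalence of Lemma~\ref{lemma:Ih-equivalence}. The guiding observation is that $\|\vb\|_{\Vbh}^2$ contains the weighted term $\int_\Omega \lambda(\div\vb)^2\,dx$, so choosing $\div\vb$ proportional to $\lambda^{-1} I_h q^c$ balances the powers of $\lambda$ on the two sides.

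Concretely, set $\pi := \lambda^{-1} I_h q^c$ which lies in $\Qh^0 = \mc{P}_0(\mc{T}_h)$ under the usual hypothesis that $\lambda$ is piecewise constant on $\mc{T}_h$. By \eqref{eq:MTW-inf-sup} and $\div \Vbh = \mc{P}_0(\mc{T}_h)$, there exists $\vb \in \Vbh$ satisfying $\div \vb = \pi$ and $\|\vb\|_{1,h} \le C\|\pi\|_0$ with $C$ independent of $h$. Lemma~\ref{lemma:integral-identity} then yields
\[
(n+1)\,(\div\vb, q^c)_\Omega = (\div\vb, I_h q^c)_\Omega = (\lambda^{-1} I_h q^c, I_h q^c)_\Omega = \|I_h q^c\|_{0,\laminv}^2,
\]
while for the denominator I would estimate
\[
\|\vb\|_{\Vbh}^2 = 2\mu\|\e(\vb)\|_0^2 + \int_\Omega \lambda(\div\vb)^2\,dx \le 2\mu_{\max} C^2 \|\pi\|_0^2 + \|I_h q^c\|_{0,\laminv}^2,
\]
and then use $\|\pi\|_0^2 = \int_\Omega \lambda^{-2}(I_h q^c)^2\,dx \le \lambda_{\min}^{-1}\|I_h q^c\|_{0,\laminv}^2$ to absorb the first term, obtaining $\|\vb\|_{\Vbh} \le \tilde C \|I_h q^c\|_{0,\laminv}$. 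A $\lambda^{-1}$-weighted version of Lemma~\ref{lemma:Ih-equivalence} to pass from $\|I_h q^c\|_{0,\laminv}$ to $\|q^c\|_{0,\laminv}$ then closes the estimate:
\[
\sup_{\vb \in \Vbh} \frac{(\div\vb, q^c)_\Omega}{\|\vb\|_{\Vbh}\|q^c\|_{0,\laminv}} \ge \frac{\|I_h q^c\|_{0,\laminv}^2/(n+1)}{\tilde C \|I_h q^c\|_{0,\laminv}\cdot (C_1)^{-1}\|I_h q^c\|_{0,\laminv}} = \frac{C_1}{(n+1)\tilde C}.
\]

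The main obstacle I foresee is justifying the $\lambda^{-1}$-weighted form of Lemma~\ref{lemma:Ih-equivalence}. The unweighted lower bound $\|I_h q^c\|_0 \ge C_1 \|q^c\|_0$ is genuinely non-elementwise: on a single simplex $T$, the vertex values of $q^c$ can sum to zero so that $(I_h q^c)|_T = 0$ while $q^c|_T$ is non-zero, so the estimate must be assembled across neighboring elements via the continuity of $q^c \in \Qh^c$. When $\lambda$ is piecewise constant on $\mc{T}_h$, the same argument of \cite{Lamichhane:2014} goes through verbatim because each element receives a constant multiplicative reweighting by $\lambda_T^{-1}$; if the hypothesis of piecewise-constant $\lambda$ is weakened, one would need to insert an $L^2$-projection onto $\mc{P}_0$ (exploiting that $\div \vb$ is already piecewise constant, so the projection is free inside the pairing with $I_h q^c$) and redo the norm-equivalence step with a $\lambda$-weighted interpolant, but the structure of the argument remains unchanged.
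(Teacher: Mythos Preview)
Your argument is correct (under the same implicit hypothesis of constant $\lambda$ that the paper's proof also uses) but is organized differently from the paper. The paper first establishes the \emph{unweighted} inf-sup
\[
\inf_{q^c \in \Qh^c} \sup_{\vb \in \Vbh} \frac{(\div\vb, q^c)_\Omega}{\|\vb\|_{1,h}\|q^c\|_0} \ge C
\]
directly from Lemmas~\ref{lemma:integral-identity} and~\ref{lemma:Ih-equivalence}, reformulates this as the existence of $\wb\in\Vbh$ with $(\div\wb,\tilde q^c)_\Omega=(q^c,\tilde q^c)_\Omega$ for all $\tilde q^c\in\Qh^c$ and $\|\wb\|_{1,h}\le C_1\|q^c\|_0$, and only then rescales $\vb=\lambda^{-1}\wb$ to pass to the weighted norms. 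You instead bake $\lambda^{-1}$ into the construction from the outset by prescribing $\div\vb=\lambda^{-1}I_hq^c$ and invoking the integral identity afterwards. Both routes use exactly the same three ingredients; the paper's ordering has the advantage that Lemma~\ref{lemma:Ih-equivalence} is applied only in its stated unweighted form, so the ``main obstacle'' you flag (a $\lambda^{-1}$-weighted version of the norm equivalence) never arises---the $\lambda$-dependence enters only through the trivial global rescaling $\vb=\lambda^{-1}\wb$. Your route is slightly more explicit about the divergence being chosen in $\mc{P}_0(\mc{T}_h)$, which makes the piecewise-constant-$\lambda$ generalization more visible, but as you correctly note the lower bound in Lemma~\ref{lemma:Ih-equivalence} is not elementwise, so that generalization is not free in either approach.
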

\begin{proof}
The inf-sup condition
  \algns{
    \inf_{q^c \in \Qh^c} \sup_{\vb \in \Vbh} \frac{\LRp{ \div \vb, q^c }_{\Omega} }{\| \vb \|_{1,h} \| q^c \|_0 } \ge C 
  }
follows immediately by Lemma~\ref{lemma:integral-identity} and Lemma~\ref{lemma:Ih-equivalence}. As an equivalent condition of this inf-sup condition, for any given $q^c \in \Qh^c$ there exists $\wb \in \Vbh$ such that 
\algns{
\LRp{\div \bs{w}, \tilde{q}^c }_{\Omega} = \LRp{q^c, \tilde{q}^c}_{\Omega} \quad \forall \tilde{q}^c \in \Qh^c, \qquad \| \bs{w} \|_{1,h} \le C_1 \|q^c \|_{0}
}
with $C_1 >0$ independent of $h$. We define $\vb = \lambda^{-1} \wb$. Then, 
\algn{ \label{eq:tmp-identity}
\LRp{ \div \vb, q^c}_{\Omega} = \int_{\Omega} \laminv (q^c)^2 \,dx = \| q^c \|_{0,\laminv}^2 .
}
In addition, since $\mu \le C \lambda$ with a uniform constant $C>0$, we can obtain that 
\algns{
  \| \vb \|_{\Vbh}^2 &= \int_{\Omega} \LRp{ 2 \mu \e(\vb) : \e(\vb) + \lambda (\div \vb)^2 } \,dx \\
  &= \lambda^{-2} \int_{\Omega} \LRp{ 2 \mu \e(\wb) : \e(\wb) + \lambda (\div \wb)^2 } \,dx \\
  &\le \LRp{2 C_{\mu} + 1} \laminv \| \wb \|_{1,h}^2 \\
  &\le \LRp{2 C_{\mu} + 1} C_1 \| q^c \|_{0,\laminv}^2 .
}
Therefore, the assertion holds by \eqref{eq:tmp-identity} and the above estimate.
\end{proof}
By a discrete Poincare inequality in \cite{Brenner:2003} we can obtain the following lemma.
\begin{lemma} \label{lemma:discrete-poincare}
  There exists $C_P>0$ independent of $h$ such that
  \algn{
    \| \qf^0 \|_{0,s_0} + \| \qf^0 \|_{0,\laminv} \le C_P S(\qf^0, \qf^0)^{\frac 12} .
  }
  holds for all $\qf^0 \in \Qh^0$ with $\int_{\Omega} \qf^0 \,dx = 0$. 
\end{lemma}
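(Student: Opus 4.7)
The plan is to leverage the Brenner type discrete Poincar\'e inequality for piecewise constant functions with vanishing mean value, which directly controls the $L^2$ norm by the interior jump seminorm
\[
\| q^0 \|_0^2 \le C_B^2 \sum_{e \in \mc{E}_h^0} h_e^{-1} \| \jump{q^0} \|_{0,e}^2.
\]
Since $q^0 \in \Qh^0 = \mc{P}_0(\mc{T}_h)$ is piecewise constant and, by hypothesis, has zero mean on $\Omega$, this inequality is applicable directly and it is the only nontrivial ingredient in the argument.

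First, I would read off from the definition of $S$ that
\[
S(q^0, q^0) = \gamma \laminv \sum_{e \in \mc{E}_h^0} h_e^{-1} \| \jump{q^0} \|_{0,e}^2 + \gamma C_1 \sum_{e \in \mc{E}_h^0} h_e^{-1} \| \jump{q^0} \|_{0,e}^2,
\]
so both summands are separately nonnegative and bound the unweighted interior jump seminorm. Combining this observation with the discrete Poincar\'e inequality immediately yields
\[
\laminv \| q^0 \|_0^2 \le C_B^2 \gamma^{-1} S(q^0, q^0), \qquad \| q^0 \|_0^2 \le C_B^2 (\gamma C_1)^{-1} S(q^0, q^0),
\]
where in the first inequality I factor $\laminv$ out of the sum (treating it as piecewise constant / uniform, or by using the assumed uniform lower bound on $\lambda$ on each element so that the inequality holds edge by edge).

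The first estimate gives $\| q^0 \|_{0,\laminv}^2 \le C_B^2 \gamma^{-1} S(q^0, q^0)$ directly. For the $s_0$-weighted norm, I use the uniform upper bound $s_0 \le c_1$ from the standing assumption on the storage coefficient to write $\| q^0 \|_{0,s_0}^2 \le c_1 \| q^0 \|_0^2$, and then apply the second estimate to obtain $\| q^0 \|_{0,s_0}^2 \le c_1 C_B^2 (\gamma C_1)^{-1} S(q^0, q^0)$. Adding the two bounds and taking square roots yields the desired inequality with $C_P$ depending only on $C_B, \gamma, C_1$ and $c_1$.

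The only potentially delicate point is the handling of $\laminv$ when $\lambda$ is not globally constant; I expect this is either absorbed into the stabilization by evaluating $\laminv$ elementwise or handled by bounding $\laminv$ on each edge by $\laminv$ on an adjacent element (using shape regularity and the assumed uniform positive lower bound on $\lambda$). Everything else is bookkeeping: applying Brenner's discrete Poincar\'e inequality once and invoking the uniform bounds on $s_0$ and $\lambda$.
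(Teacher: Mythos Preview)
Your proposal is correct and follows precisely the approach the paper indicates: the paper itself gives no proof beyond the single sentence ``By a discrete Poincar\'e inequality in \cite{Brenner:2003} we can obtain the following lemma,'' and you have correctly supplied the details by applying Brenner's inequality to the mean-zero piecewise constant $q^0$ and then reading off the two weighted bounds from the two summands of $S(q^0,q^0)$. Your caveat about the handling of $\laminv$ is appropriate; in the paper $\lambda$ is effectively treated as a scalar in the definition of $S$ (and elsewhere, e.g., the rescaling $\vb=\lambda^{-1}\wb$ in the proof of Lemma~\ref{lemma:aux-inf-sup}), so no extra work is needed there.
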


We introduce another interpolation operator $I_h^c: \Qh^0 \ra \Qh^c$ for analysis. 

\begin{lemma} \label{lemma:S-inequality}
There exists $I_h^c: \Qh^0 \to \Qh^c$, a linear operator, such that
\algn{
\|q^0 - I_h^c q^0 \|_{0,\laminv} %\le Ch \LRp{\sum_{e \in \mathcal{E}_h^0} h_e^{-1} \| \jump{q^0} \|_{0, e}}^{\half} 
\le C S \LRp{q, q}^{\half}
}
with $C>0$ independent of $h$.
\end{lemma}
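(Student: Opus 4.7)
The plan is to take $I_h^c q^0$ to be a standard Oswald-type (nodal averaging) quasi-interpolant into $\Qh^c$. At each vertex $v_i$ of $\mc{T}_h$, set
\[
(I_h^c q^0)(v_i) := \frac{1}{|\omega_i|}\sum_{K \subset \omega_i} |K| \, q^0\bigl|_K ,
\]
where $\omega_i$ is the patch of elements containing $v_i$, and extend $I_h^c q^0$ as the continuous piecewise-linear function with these nodal values. Since $\Qh^c = \mc{P}_1(\mc{T}_h) \cap H^1(\Omega)$ carries no Dirichlet condition, the same rule is used at boundary vertices, and $I_h^c q^0 \in \Qh^c$ by construction. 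Linearity of $I_h^c$ is immediate from the formula.

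The heart of the argument is the classical Oswald interpolation estimate. On each element $K$, the pointwise value of $q^0 - I_h^c q^0$ at any vertex $v_i$ of $K$ is a convex combination of differences $q^0\bigl|_K - q^0\bigl|_{K'}$ over elements $K'$ in the patch $\omega_i$. Any such difference telescopes along a chain of elements in $\omega_i$ into jumps $\jump{q^0}$ across the \emph{interior} edges separating consecutive elements in the chain. Combined with a shape-regularity scaling argument, this yields the local bound
\[
\| q^0 - I_h^c q^0 \|_{L^2(K)}^2 \le C \sum_{e \in \mc{E}_h^0,\; e \subset \omega_K} h_e \| \jump{q^0} \|_{L^2(e)}^2 ,
\]
where $\omega_K$ is the patch around $K$. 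Summing over $K$ and using finite overlap of patches gives
\[
\| q^0 - I_h^c q^0 \|_{L^2(\Omega)}^2 \le C \sum_{e \in \mc{E}_h^0} h_e \| \jump{q^0} \|_{L^2(e)}^2 .
\]

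To inject the $\laminv$ weight, I would carry $\laminv$ through the local estimate: since $\lambda$ has a uniform positive lower bound, $\laminv$ on an element $K$ is equivalent (up to an $h$-independent constant) to $\laminv$ on any adjacent edge, so
\[
\| q^0 - I_h^c q^0 \|_{0,\laminv}^2 \le C \sum_{e \in \mc{E}_h^0} \laminv h_e \| \jump{q^0} \|_{L^2(e)}^2 .
\]
Because $q^c \in H^1(\Omega)$ implies $\jump{q^c}=0$ on every $e \in \mc{E}_h^0$, we have $\jump{q^0}=\jump{q}$ on interior edges. Using $h_e = h_e^2 \cdot h_e^{-1} \le \mathrm{diam}(\Omega)^2 \cdot h_e^{-1}$ and the first term in the definition of $S$,
\[
\| q^0 - I_h^c q^0 \|_{0,\laminv}^2 \le C \sum_{e \in \mc{E}_h^0} \laminv h_e^{-1} \| \jump{q} \|_{L^2(e)}^2 \le C \gamma^{-1} S(q,q),
\]
with $C$ independent of both $h$ and $\lambda$ (the $\laminv$ weight matches identically on both sides).

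The only non-routine ingredient is the Oswald-type local estimate and the tracking of $\laminv$ through it; both are standard but require careful use of finite overlap of patches and of the telescoping of element values into interior-edge jumps. Once those are in hand, the weighting and the conversion of $h_e$ into $h_e^{-1}$ (at the cost of the domain-diameter factor absorbed into $C$) complete the proof.
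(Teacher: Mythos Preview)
Your proposal is correct and follows essentially the same route as the paper. The paper takes $I_h^c$ to be the reconstruction operator of Buffa--Ortner and quotes their local estimate $\|q^0 - I_h^c q^0\|_{L^2(K)}^2 \le C h_K^2 \sum_{e \subset \operatorname{int}(M_K)} h_e^{-1}\|\jump{q^0}\|_{L^2(e)}^2$, then finishes by shape regularity and finite overlap; your Oswald-type nodal averaging is precisely such a reconstruction, and your telescoping argument reproduces the same local bound, after which the conversion $h_e \le \operatorname{diam}(\Omega)^2 h_e^{-1}$ and the identity $\jump{q^0}=\jump{q}$ on interior edges match the paper's (terse) conclusion.
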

\begin{proof}
We define $I_h^c$ by the reconstruction operator in \cite{Buffa-Ortner:2009} and consider $I_h^c q^0$ for $q^0 \in \Qh^0$.
For a simplex $K$ in $\mc{T}_h$ let $M_K$ be the union of all (closed) simplices in $\mc{T}_h$ adjacent to $K$ and $\text{int} (M_K)$ is the interior of $M_K$. It is known that 
\algns{
  \int_{T} (q^0 - I_h^c q^0)^2 \,dx \le C h_K^2 \sum_{e \subset \text{int} (M_K)} h_e^{-1} \int_e \jump{q^0}|_e^2 \,ds 
}
with $C$ independent of the mesh sizes \cite[Theorem~6]{Buffa-Ortner:2009}. By the shape regularity assumption $h_K \sim h_e$ in the above inequality with constants independent of the mesh sizes,  the conclusion follows because $\{M_K\}_{K \in \mc{T}_h}$ covers $\Omega$ with only uniformly bounded number of finite overlapping.
\end{proof}

\begin{lemma} \label{lemma:weak-inf-sup}
There exists $C_1, C_2 > 0$ independent of $h$ satisfying the following: For any given $q \in \Qh$ one can find $\vb \not = 0 \in \Vbh$ satisfying
\algn{
\frac{\LRp{ \div \vb, q }_{\Omega} }{\|\vb\|_{\Vb_h}} \ge C_1 \|q\|_{0,\lambda^{-1}} - C_2 S \LRp{q, q}^{\half} .
}
\end{lemma}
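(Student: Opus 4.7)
The plan is to decompose $q \in \Qh$ as $q = q^c + q^0$ with $q^c \in \Qh^c$ and $q^0 \in \Qh^0$ mean-value zero on $\Omega$, test against the continuous piece $q^c$ using the inf-sup of Lemma~\ref{lemma:aux-inf-sup}, and absorb the unwanted cross term $(\div \vb, q^0)_\Omega$ into $S(q,q)^{1/2}$ via the discrete Poincar\'e estimate of Lemma~\ref{lemma:discrete-poincare}.

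First, I note that since $q^c$ has no jumps on interior edges/faces, the jumps appearing in $S(\cdot, \cdot)$ satisfy $\jump{q}|_e = \jump{q^0}|_e$ for every $e \in \mc{E}_h^0$; consequently $S(q,q) = S(q^0, q^0)$. Applying Lemma~\ref{lemma:discrete-poincare} to the mean-value zero function $q^0$ therefore yields
\begin{equation*}
\| q^0 \|_{0,\laminv} \le C_P\, S(q,q)^{1/2}.
\end{equation*}

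Next, Lemma~\ref{lemma:aux-inf-sup} provides a constant $c_0 > 0$ and some $\vb \in \Vbh$ with
\begin{equation*}
\LRp{\div \vb, q^c}_\Omega \ge c_0 \| \vb \|_{\Vbh}\, \| q^c \|_{0,\laminv}.
\end{equation*}
For this same $\vb$, I split $\LRp{\div \vb, q}_\Omega = \LRp{\div \vb, q^c}_\Omega + \LRp{\div \vb, q^0}_\Omega$ and bound the second term by a weighted Cauchy--Schwarz inequality. From the definition of $\| \cdot \|_{\Vbh}$ one has $\| \div \vb \|_{0,\lambda} \le \| \vb \|_{\Vbh}$, so
\begin{equation*}
|\LRp{\div \vb, q^0}_\Omega| \le \| \div \vb \|_{0,\lambda}\, \| q^0 \|_{0,\laminv} \le \| \vb \|_{\Vbh}\, \| q^0 \|_{0,\laminv}.
\end{equation*}
Combining the two bounds, dividing through by $\|\vb\|_{\Vbh}$, and using the triangle inequality $\| q^c \|_{0,\laminv} \ge \| q \|_{0,\laminv} - \| q^0 \|_{0,\laminv}$, I arrive at
\begin{equation*}
\frac{\LRp{\div \vb, q}_\Omega}{\| \vb \|_{\Vbh}} \ge c_0 \| q \|_{0,\laminv} - (c_0 + 1)\, \| q^0 \|_{0,\laminv},
\end{equation*}
and then inserting the discrete Poincar\'e bound for $\| q^0 \|_{0,\laminv}$ delivers the desired inequality with $C_1 = c_0$ and $C_2 = (c_0 + 1)C_P$.

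The only place that requires care is the control of the cross term $\LRp{\div \vb, q^0}_\Omega$: since $\div \Vbh = \Qh^0$, this pairing is not zero, and a naive $L^2$--$L^2$ Cauchy--Schwarz would produce an uncontrolled $\| q^0 \|_0$ rather than $\| q^0 \|_{0,\laminv}$. The key observation making the argument work is that $\| \cdot \|_{\Vbh}$ already encodes a $\lambda$-weight on $\div \vb$, which lets the $\lambda^{-1}$-weight on $q^0$ come out for free and match the weighted Poincar\'e estimate of Lemma~\ref{lemma:discrete-poincare}.
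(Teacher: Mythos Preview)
Your proof is correct, but it takes a different and somewhat more direct route than the paper's. The paper applies the inf-sup of Lemma~\ref{lemma:aux-inf-sup} not to $q^c$ but to $q^c + I_h^c q^0 \in \Qh^c$, where $I_h^c:\Qh^0\to\Qh^c$ is the reconstruction operator of Lemma~\ref{lemma:S-inequality}; the residual term is then $q^0 - I_h^c q^0$, whose $\|\cdot\|_{0,\laminv}$-norm is controlled by $S(q,q)^{1/2}$ via Lemma~\ref{lemma:S-inequality}. You instead test against $q^c$ alone and absorb the full piecewise-constant part $q^0$ using the discrete Poincar\'e inequality of Lemma~\ref{lemma:discrete-poincare}. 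Both arguments hinge on the same weighted Cauchy--Schwarz step $|(\div \vb, r)_\Omega|\le \|\vb\|_{\Vbh}\|r\|_{0,\laminv}$ and the same triangle-inequality manipulation, so they are structurally parallel; the practical difference is that your version avoids Lemma~\ref{lemma:S-inequality} and the operator $I_h^c$ altogether, making it the more elementary of the two. The paper's detour through $I_h^c$ buys nothing extra for this particular lemma, though the reconstruction operator is a natural tool to reach for and may yield sharper constants in settings where $\|q^0 - I_h^c q^0\|_{0,\laminv}$ is much smaller than $\|q^0\|_{0,\laminv}$.
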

\begin{proof}
Let $q = q^c + q^0 \in \Qh^c + \Qh^0$ be given and consider the decomposition $q = q^c + I_h^c q^0 - I_h^c q^0 + q^0$. By the inf-sup condition of $\Vbh$ and $\Qh^c$ in Lemma~\ref{lemma:aux-inf-sup} and a scaling with $\lambda$, there exists $\bs{w} \in \Vbh$ such that
\algn{
\LRp{\div \bs{w}, q^c + I_h^c q^0}_{\Omega} &= \|q^c + I_h^c q^0 \|_{0, \lambda^{-1}}^2 , \\
\| \bs{w} \|_{\Vbh} &\le C_3 \|q^c + I_h^c q^0 \|_{0,\lambda^{-1}}
}
with a constant $C_3$ independent of $h$. By the triangle inequality %and Lemma~\ref{lemma:S-inequality},
\algn{ \label{eq:S-triangle-ineq}
\|q^c + I_h^c q^0 \|_{0,\lambda^{-1}} &= \| \qf + (I_h^c \qf^0 - \qf^0) \|_{0,\laminv} \\
&\le \|q\|_{0,\lambda^{-1}} + \|I_h^c q^0 - q^c \|_{0, \lambda^{-1}} . %\le \|q\|_{\lambda^{-1}} + C_4 S \LRp{q, q}^{\half}
}
By the definition of $\bs{w}$ and the Cauchy-Schwarz inequality,
\algns{
\LRp{\div \bs{w}, q}_{\Omega} &= \LRp{\div \bs{w}, q^c + I_h^c q^0 - I_h^c q^0 + q^0}_{\Omega} \\ \notag
&= \| q^c + I_h^c q^0 \|_{\lambda^{-1}}^2 - \LRp{\div \bs{w}, I_h^c q^0 - q^0}_{\Omega} \\ \notag
&\ge \|q^c + I_h^c q^0 \|_{\lambda^{-1}}^2 - \| \div \bs{w} \|_{0,\lambda} \| I_h^c q^0 - q^0 \|_{0, \laminv}
}
Since $\| \div \wb \|_{0,\lambda} \le \| \wb \|_{\Vbh} \le C_3 \| q^c + I_h^c q^0 \|_{0,\laminv}$, dividing by $\| \bs{w} \|_{\Vbh}$ gives
\algns{
\frac{\LRp{ \div \wb, q}_{\Omega}}{\| \wb \|_{\Vbh} } &\ge \frac{1}{C_3} \|q^c + I_h^c q^0 \|_{0, \laminv} - \| I_h^c q^0 - q^0 \|_{0, \laminv}  .
}
By \eqref{eq:S-triangle-ineq} and Lemma~\ref{lemma:S-inequality}, we get
\algns{
\frac{\LRp{ \div \wb, q}_{\Omega}}{\| \wb \|_{\Vbh} } &\ge \frac{1}{C_3} \| q \|_{0, \laminv} - \frac{C_3 + 1}{C_3} \| I_h^c q^0 - q^0 \|_{0, \laminv}  \\ \notag
&\ge \frac{1}{C_3} \| q \|_{0, \laminv}  - \frac{ C_4 (C_3 +1) }{C_3} S \LRp{q, q}^{\half} .
}
The conclusion holds with $C_1 = 1/C_3$ and $C_2 = C_4(C_3+1)/C_3$. 
%This inequality gives 
% %
% \algn{ \label{eq:tmp-ineq}
% C_3 \frac{\LRp{\div \bs{w}, q}_{\Omega}}{\| \bs{w} \|_{\Vbh}} - C_3 C_4 S(q, q)^{\half} \le \| q^0 + I_h^c q^0 \|_{0,\laminv} .
% }
% %
% If we combine this with \eqref{eq:S-triangle-ineq}, we get the desired inequality.
\end{proof}

Let $B : (\Vbh \times \Qh) \times (\Vbh \times \Qh) \ra \R$ be defined by 
\algns{
  B((\vb, \qf), (\tilde{\vb}, \tilde{\qf})) &= a_{\ub} (\vb, \tilde{\vb}) - \LRp{ \alpha \qf, \div \tilde{\vb} }_{\Omega} - \LRp{ \alpha \tilde{\qf}, \div \vb }_{\Omega} - \LRp{ s_0 \qf, \tilde{\qf}}_{\Omega} - S\LRp{ \qf, \tilde{\qf}} .
}
For simplicity we define $\| (\vb, q) \|_{\Vbh \times \Qh}$ by 
\algns{
  \| (\vb, \qf) \|_{\Vbh \times \Qh} := \LRp{ \| \vb \|_{\Vbh}^2 + \| q \|_{0,\laminv}^2 + \| q \|_{0,s_0}^2 }^{\frac 12}.
}
Then, we prove the following stability result.
\begin{lemma} \label{lemma:aux-inf-sup}
  There exists $C>0$ independent of $h$ such that
  \algns{
  \inf_{(\tilde{\vb}, \tilde{\qf}) \in \Vbh \times \Qh} \sup_{(\vb, \qf) \in \Vbh \times \Qh} \frac{B((\vb, \qf), (\tilde{\vb}, \tilde{\qf})) }{ \| (\bs{v}, q) \|_{\Vbh \times \Qh} \| (\tilde{\vb}, \tilde{q}) \|_{\Vbh \times \Qh}  } \ge C > 0 .
  }
\end{lemma}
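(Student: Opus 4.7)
The plan is to use the classical saddle-point Fortin argument: first, a diagonal test pair yields coercivity in two of the three norm-pieces (the $\ub$-energy and the $s_0$/stabilization parts of $\qf$), then a second test pair built from the weak inf-sup of Lemma~\ref{lemma:weak-inf-sup} recovers the remaining $\|\qf\|_{0,\laminv}$ control. The main obstacle is that Lemma~\ref{lemma:weak-inf-sup} is only a \emph{weak} inf-sup, so the surplus stabilization term $S(\tilde{\qf},\tilde{\qf})$ produced by the first step must be large enough to absorb the error it introduces.

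First I would take the diagonal test pair $(\vb,\qf) := (\tilde{\vb}, -\tilde{\qf})$. The two coupling integrals $-(\alpha \qf,\div\tilde{\vb})$ and $-(\alpha\tilde{\qf},\div\vb)$ cancel, and the signs on the symmetric pieces are flipped in our favour, giving
\[
B((\tilde{\vb},-\tilde{\qf}),(\tilde{\vb},\tilde{\qf})) = \|\tilde{\vb}\|_{\Vbh}^{2} + \|\tilde{\qf}\|_{0,s_0}^{2} + S(\tilde{\qf},\tilde{\qf}).
\]
This controls $\|\tilde{\vb}\|_{\Vbh}$, $\|\tilde{\qf}\|_{0,s_0}$ and $S(\tilde{\qf},\tilde{\qf})^{1/2}$, but not $\|\tilde{\qf}\|_{0,\laminv}$.

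To recover the missing piece, apply Lemma~\ref{lemma:weak-inf-sup} to $\tilde{\qf}$ and rescale the resulting $\bs{w} \in \Vbh$ so that $\|\bs{w}\|_{\Vbh} = \|\tilde{\qf}\|_{0,\laminv}$; then $(\div\bs{w},\tilde{\qf}) \ge C_{1}\|\tilde{\qf}\|_{0,\laminv}^{2} - C_{2}\, S(\tilde{\qf},\tilde{\qf})^{1/2}\|\tilde{\qf}\|_{0,\laminv}$. I would now use the augmented test pair $(\vb,\qf) := (\tilde{\vb}-\delta\bs{w}, -\tilde{\qf})$ for a small parameter $\delta>0$ to be fixed at the end. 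Expanding $B$ gives the three positive terms from the first step plus a gain $+\delta\alpha(\div\bs{w},\tilde{\qf})$ and a cross term $-\delta\, a_{\ub}(\bs{w},\tilde{\vb})$. Cauchy--Schwarz and Young absorb the cross term into $\tfrac{1}{2}\|\tilde{\vb}\|_{\Vbh}^{2} + \tfrac{\delta^{2}}{2}\|\tilde{\qf}\|_{0,\laminv}^{2}$ (here I use $\|\bs{w}\|_{\Vbh} = \|\tilde{\qf}\|_{0,\laminv}$), while a second Young splits the harmful $\delta\alpha C_{2}\, S^{1/2}\|\tilde{\qf}\|_{0,\laminv}$ into a small share of $\|\tilde{\qf}\|_{0,\laminv}^{2}$ and a small multiple of $S(\tilde{\qf},\tilde{\qf})$, the latter absorbed by the $S$-term generated in step one. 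Choosing $\delta$ sufficiently small, depending only on $\alpha,C_{1},C_{2}$, leaves strictly positive coefficients on $\|\tilde{\vb}\|_{\Vbh}^{2}$, $\|\tilde{\qf}\|_{0,\laminv}^{2}$, $\|\tilde{\qf}\|_{0,s_0}^{2}$ and $S(\tilde{\qf},\tilde{\qf})$, hence
\[
B((\vb,\qf),(\tilde{\vb},\tilde{\qf})) \ge c_{1}\,\|(\tilde{\vb},\tilde{\qf})\|_{\Vbh\times\Qh}^{2}.
\]

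To close, the triangle inequality and $\|\bs{w}\|_{\Vbh}=\|\tilde{\qf}\|_{0,\laminv}$ bound $\|(\vb,\qf)\|_{\Vbh\times\Qh} \le c_{2}\,\|(\tilde{\vb},\tilde{\qf})\|_{\Vbh\times\Qh}$, which combined with the previous display yields the inf-sup estimate with constant $C=c_{1}/c_{2}$. The only real book-keeping difficulty is balancing $\delta$ against the constants $C_{1},C_{2}$ from the weak inf-sup so that the $S$-contribution on the diagonal dominates the error term it inherits; once that is set, every other step is standard Young/Cauchy--Schwarz manipulation and does not require any new analytic ingredient beyond what is already stated in the preceding lemmas.
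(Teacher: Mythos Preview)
Your argument is correct and is essentially the same as the paper's proof: the paper also chooses the single test pair $(\vb,\qf)=(\tilde{\vb}+\delta_1\bs{w},-\tilde{\qf})$ with $\bs{w}$ from Lemma~\ref{lemma:weak-inf-sup} rescaled so that $\|\bs{w}\|_{\Vbh}=\|\tilde{\qf}\|_{0,\laminv}$, expands $B$, applies Young's inequality to $(\div\bs{w},\tilde{\qf})$ to turn the weak inf-sup into $C_1'\|\tilde{\qf}\|_{0,\laminv}^{2}-C_2'S(\tilde{\qf},\tilde{\qf})$, absorbs the cross term $\delta_1 a_{\ub}(\bs{w},\tilde{\vb})$ by Cauchy--Schwarz/Young, and then fixes $\delta_1$ small. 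Your preliminary discussion of the diagonal pair $(\tilde{\vb},-\tilde{\qf})$ is just the $\delta=0$ case of the same computation, so the two presentations differ only in exposition and in the (immaterial) sign of $\bs{w}$.
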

\begin{proof}
For given $(\tilde{\vb}, \tilde{\qf}) \in \Vbh \times \Qh$ it suffices to show that there exists $(\vb, \qf) \in \Vbh \times \Qh$ such that 
\algns{
  \| (\vb, \qf) \|_{\Vbh \times \Qh} &\le C_3  \| (\tilde{\vb}, \tilde{q}) \|_{\Vbh \times \Qh} , \\
  B((\vb, \qf), (\tilde{\vb}, \tilde{\qf})) &\ge C_4  \| (\tilde{\vb}, \tilde{q}) \|_{\Vbh \times \Qh}^2
}
for some constants $C_3, C_4 >0$ independent of $h$. 

Suppose that $(0,0)\not = (\tilde{\vb}, \tilde{\qf}) \in \Vbh \times \Qh$ is given. Let $\wb$ be an element in $\Vbh$ such that 
\algn{ \label{eq:w-weak-inf-sup}
  \frac{\LRp{\div \wb, \tilde{q}}_{\Omega} }{\| \wb \|_{\Vbh} } \ge C_1 \| \tilde{q} \|_{0,\laminv} - C_2 S(\tilde{q}, \tilde{q})^{\frac 12} 
}
by Lemma~\ref{lemma:weak-inf-sup}. By rescaling we can assume that $\| \wb \|_{\Vbh} = \| q \|_{0,\laminv}$ without loss of generality, so we can obtain 
\algn{ \label{eq:w-weak-inf-sup2}
  {\LRp{\div \wb, \tilde{q}}_{\Omega} } \ge C_1' \| \tilde{q} \|_{0,\laminv}^2 - C_2' S(\tilde{q}, \tilde{q})
}
from \eqref{eq:w-weak-inf-sup} with Young's inequality. 

Take $(\vb, q)$ as $\vb = \tilde{\vb} + \delta_1 \wb$ and $q = - \tilde{q}$ with $\delta_1>0$ which will be determined later. Then, 
\mltln{ \label{eq:B-intm}
  B((\vb, q), (\tilde{\vb}, \tilde{q})) \\
  = a_{\ub} (\tilde{\vb}, \tilde{\vb}) + \delta_1 a_{\ub} (\wb, \tilde{\vb}) + \delta_1 \LRp{ \alpha \tilde{q}, \div \wb }_{\Omega} + \LRp{ s_0 \tilde{q}, \tilde{q}}_{\Omega} + S(\tilde{q}, \tilde{q}) .
}
By \eqref{eq:w-weak-inf-sup} with $\| \wb \|_{\Vbh}=1$ and the assumption that $\alpha$ has a uniformly positive lower bound, we have
\algns{
\LRp{ \alpha \tilde{q}, \div \wb}_{\Omega} \ge C_1'' \| \tilde{q} \|_{0,\laminv}^2 - C_2'' S(\tilde{q}, \tilde{q})
}
with $C_1'', C_2''$ depending on $C_1'$, $C_2'$ and the lower bound of $\alpha$. Note that 
\algns{
  \delta_1 | a_{\ub} (\wb, \tilde{\vb}) | \le \frac 12 \| \tilde{\vb} \|_{\Vbh}^2 + \frac{\delta_1^2}{2} \| \wb \|_{\Vbh}^2 = \frac 12 \| \tilde{\vb} \|_{\Vbh}^2 + \frac{\delta_1^2}{2} \| \tilde{q} \|_{0,\laminv}^2   .
}
If we use these two inequalities to \eqref{eq:B-intm}, then we get
\algns{
  B((\vb, q), (\tilde{\vb}, \tilde{q})) &\ge a_{\ub} (\tilde{\vb}, \tilde{\vb}) - \delta_1 |a_{\ub} (\wb, \tilde{\vb})| + \delta_1 \LRp{ C_1'' \| \tilde{q} \|_{0,\laminv}^2 - C_2'' S(\tilde{q}, \tilde{q}) } \\
  &\quad + \LRp{ s_0 \tilde{q}, \tilde{q}}_{\Omega} +  S(\tilde{q}, \tilde{q}) \\
  &\ge \frac 12 a_{\ub} (\tilde{\vb}, \tilde{\vb}) - \frac{\delta_1^2}2 \| \tilde{q} \|_{0,\laminv}^2 + \delta_1 \LRp{ C_1'' \| \tilde{q} \|_{0,\laminv}^2 - C_2'' S(\tilde{q}, \tilde{q}) } \\
  &\quad +  \LRp{ s_0 \tilde{q}, \tilde{q}}_{\Omega} +  S(\tilde{q}, \tilde{q}) .
}
If we choose a sufficiently small $\delta_1$ satisfying $\delta_1 C_1'' \ge \delta_1^2$ and $\delta_1 C_2'' \le 1/2$, then there exists $C_4>0$ depending only on $C_1''$, $C_2''$ such that 
\algns{
  B((\vb, q), (\tilde{\vb}, \tilde{q})) &\ge C_4 \| \delta_1 (\tilde{\vb}, \tilde{q}) \|_{\Vbh \times \Qh}^2 .
}
By the definition of $(\vb, q)$ it is not difficult to see that 
$\| (\vb, \qf) \|_{\Vbh \times \Qh} \le C_3  \| (\tilde{\vb}, \tilde{q}) \|_{\Vbh \times \Qh}$ with a constant $C_3$ depending only on $\delta_1$. Therefore, the assertion is proved.
\end{proof}
Before we prove the a priori error analysis, we show that the proposed method needs considerably smaller number of degrees of freedom (DOFs).
For the discretization of $\ub$ the MTW element has minimal inter-element continuity for discrete Korn inequality (\cite{Mardal-Winther:2006}), so we only need to compare the degrees of freedom for $p$ when $p$ is discretized by the dual form of mixed method with the lowest order Raviart-Thomas element. 
\begin{rmk}
	For easy computation of DOFs we consider a triangulation of the unit cube domain $\Omega$ such that $\Omega$ is subdivided into $N \times N \times N$ subcubes and each subcube is decomposed to 6 tetrahedra. Since each subcube has 6 internal faces, the numbers of vertices, faces, and tetrahedra are $(N+1)^3$, $6 N^2(N+1) + 6N^3$, $6N^3$. The total number of DOFs of mixed method discretization of $p$ with the lowest order Raviart-Thomas element, is $6 N^2(N+1) + 6N^3 + 6N^3 \sim 18 N^3$. After hybridization (see, e.g.,~\cite{Arnold:1985}), only one face DOF is needed, so this can be reduced to $\sim 12 N^3$. The lowest order enriched Galerkin method needs only one DOF for vertices and one DOF for tetrahedra, so the total number of DOFs is $(N+1)^3 + 6 N^3 \sim 7 N^3$ which still is considerably smaller than the one of hybridized mixed method. For hexahedral finite elements the numbers of vertices, faces, cubes are $(N+1)^3$, $3N^2(N+1)$, $N^3$. Therefore, the hybridized method with the lowest order Raviart-Thomas element needs approximately $3N^3$ DOFs whereas the IOS-EG method needs approximately $2N^3$ DOFs.
\end{rmk}

\subsection{Semidiscrete solutions and well-posedness}

According to the variational formulation \eqref{eq:weak-up-eqs} we define the semidiscrete problem as follows: For compatible numerical initial data $(\ub_h(0), \pf_h(0)) \in \Vb_h \times \Qh$ satisfying 
\algn{ \label{eq:numerical-comp-data}
  a_{\ub} \LRp{ \ub_h(0), \vb } - \LRp{ \alpha \pf_h(0), \div \vb }_{\Omega} = \LRp{\fb(0), \vb}_{\Omega} \qquad \forall \vb \in \Vbh
}
we find a semidiscrete solution $(\ub_h, p_h) : C^1(0,\infty; \Vb_h \times \Qh)$ such that
\subeqns{eq:semidiscrete-eqs}{
\label{eq:semidiscrete-eq1} a_{\ub} \LRp{\ub_h (t), \vb} - \LRp{ \alpha p_h (t), \div \vb }_{\Omega} &= \LRp{ \bs{f} (t), \vb }_{\Omega} , \\
\label{eq:semidiscrete-eq2} - \LRp{ \alpha \div \dot{\ub}_h (t) , \qf }_{\Omega} - S\LRp{\dpf_h (t), \qf } - \LRp{ s_0 \dpf_h (t) , \qf }_{\Omega} - a_p \LRp{ \pf_h (t) , \qf }  &= -\LRp{ g (t), \qf}_{\Omega} 
}
for all $(\vb, \qf) \in \Vbh \times \Qh$ and for all $t \ge 0$.
\begin{theorem}
Suppose that $(\ub_h(0), \pf_h(0)) \in \Vbh \times \Qh$ satisfies \eqref{eq:numerical-comp-data}. Then, \eqref{eq:semidiscrete-eqs} has a unique solution. 
\end{theorem}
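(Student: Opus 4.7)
The plan is to reduce the differential--algebraic system \eqref{eq:semidiscrete-eqs} to a standard linear ODE on the finite-dimensional space $\Vbh \times \Qh$, apply Picard--Lindel\"of, and use the compatibility assumption \eqref{eq:numerical-comp-data} to propagate the algebraic constraint from $t=0$ to all $t \ge 0$. The key analytic input is the inf-sup condition for the bilinear form $B$ already established in Lemma~\ref{lemma:aux-inf-sup}.

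First I would differentiate \eqref{eq:semidiscrete-eq1} in $t$ to obtain, for every $\vb \in \Vbh$,
\[
a_{\ub}(\dot{\ub}_h(t), \vb) - \LRp{\alpha \dot{\pf}_h(t), \div \vb}_{\Omega} = \LRp{\dot{\fb}(t), \vb}_{\Omega}.
\]
Adding this identity to \eqref{eq:semidiscrete-eq2}, the definition of $B$ immediately yields
\[
B\bigl((\dot{\ub}_h(t), \dot{\pf}_h(t)), (\vb, q)\bigr) = \LRp{\dot{\fb}(t), \vb}_{\Omega} - \LRp{g(t), q}_{\Omega} + a_p(\pf_h(t), q)
\]
for all $(\vb, q) \in \Vbh \times \Qh$. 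Since $\Vbh \times \Qh$ is finite-dimensional, the inf-sup condition for $B$ proved in Lemma~\ref{lemma:aux-inf-sup} is equivalent to nondegeneracy of the associated matrix, so the map sending $(\dot{\ub}_h(t), \dot{\pf}_h(t))$ to the right-hand side is a linear isomorphism. Inverting it expresses $(\dot{\ub}_h(t), \dot{\pf}_h(t))$ as a bounded linear function of $\pf_h(t)$ (through $a_p$) plus a continuous time-dependent forcing coming from $\dot{\fb}$ and $g$.

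This places the system in the form $\dot{X}(t) = A X(t) + F(t)$ on a finite-dimensional space with $A$ a bounded linear operator and $F$ continuous in $t$, so classical Picard--Lindel\"of yields a unique global $C^1$ solution $X = (\ub_h, \pf_h)$ with the prescribed initial datum. To verify that the produced solution satisfies the original, undifferentiated identity \eqref{eq:semidiscrete-eq1}, introduce the residual
\[
R(t; \vb) := a_{\ub}(\ub_h(t), \vb) - \LRp{\alpha \pf_h(t), \div \vb}_{\Omega} - \LRp{\fb(t), \vb}_{\Omega}, \qquad \vb \in \Vbh.
\]
By construction $\partial_t R(t; \vb) \equiv 0$, and \eqref{eq:numerical-comp-data} gives $R(0; \vb) = 0$ for every $\vb$, hence $R \equiv 0$ for all $t \ge 0$. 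This restores \eqref{eq:semidiscrete-eq1}, and uniqueness of $(\ub_h, \pf_h)$ is inherited from uniqueness of the ODE.

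The main obstacle is the differential--algebraic character of the system: \eqref{eq:semidiscrete-eq1} carries no time derivative, so naive ODE theory does not apply directly. The key step is differentiating the algebraic constraint to extract an ODE for $(\dot{\ub}_h, \dot{\pf}_h)$, inverting it by means of the inf-sup stability of $B$, and appealing to the compatibility condition so that the original constraint is preserved in time rather than only its derivative. Routine Gr\"onwall estimates on the ODE would additionally provide an a priori bound, but existence and uniqueness alone require no such refinement.
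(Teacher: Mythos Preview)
Your argument is correct, but it reduces the DAE to an ODE by a different mechanism than the paper's. The paper writes the system in matrix form, uses invertibility of the elasticity block $\Bbb{A}_{\ub}$ (coming from $a_{\ub}$) to solve the constraint \eqref{eq:semidiscrete-eq1} \emph{algebraically} for $\ub_h$ in terms of $\pf_h$, substitutes into \eqref{eq:semidiscrete-eq2}, and obtains a linear ODE in $\pf_h$ alone whose leading coefficient is the Schur complement $-(\Bbb{B}\Bbb{A}_{\ub}^{-1}\Bbb{B}^T+\Bbb{S})$; Lemma~\ref{lemma:aux-inf-sup} is then invoked to conclude that this Schur complement is invertible. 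By contrast, you differentiate the constraint and combine it with \eqref{eq:semidiscrete-eq2} to get a single ODE for the pair $(\dot{\ub}_h,\dot{\pf}_h)$, invert $B$ \emph{directly} via Lemma~\ref{lemma:aux-inf-sup}, and then propagate the constraint by the residual argument from the compatible initial data. Both routes are sound and rest on the same inf-sup lemma; the paper's substitution avoids differentiating $\fb$ and needs no separate constraint-propagation step (since \eqref{eq:semidiscrete-eq1} holds by definition of $\ub_h$), while your differentiation approach is slightly more abstract and makes the role of the compatibility condition \eqref{eq:numerical-comp-data} explicit.
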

\begin{proof}
For well-posedness of \eqref{eq:semidiscrete-eqs} we consider a matrix realization of \eqref{eq:semidiscrete-eqs}. We identify $\Vbh$ and $\Qh$ to $\R^{N_{\ub}}$ and $\R^{N_{\pf}}$ with certain global bases of $\Vbh$ and $\Qh$, and denote the vector realizations of $\ub_h$, $\pf_h$ and the $L^2$ projections of $\bs{f}, g$ to $\Vbh, \Qh$ by $\vec{\ub}_h$, $\vec{\pf}_h$, $\vec{\bs{f}}$, $\vec{g}$. Let 
\algns{
  \Bbb{A}_{\ub}, \quad \Bbb{B}, \quad \Bbb{S}, \quad \Bbb{A}_{\pf}
}
be the matrices generated by the bilinear forms 
\algns{
  a_{\ub}(\vb, \tilde{\vb}), \quad - \LRp{\alpha \div \vb , \qf}_{\Omega}, \quad S(\qf, \tilde{\qf}) + \LRp{s_0 \qf, \tilde{\qf} }_{\Omega}, \quad a_p(\qf, \tilde{\qf}) 
}
for $\vb, \tilde{\vb} \in \Vbh$, $\qf, \tilde{\qf} \in \Qh$ with the identifications $\Vbh \leftrightarrow \R^{N_{\ub}}$ and $\Qh \leftrightarrow \R^{N_{\pf}}$. Then, \eqref{eq:semidiscrete-eqs} in matrix form is 
\algns{
  \Bbb{A}_{\ub} {\vec{\ub}}_h(t) + \Bbb{B}^T \vec{\pf}_h(t) &= \vec{\bs{f}(t)}, \\
  \Bbb{B} \dot{\vec{\ub}}_h(t) - \Bbb{S} \dot{\vec{\pf}}_h(t) - \Bbb{A}_{\pf} \vec{\pf}_h(t) &= - \vec{g}(t) .
}
Since $\Bbb{A}_{\ub}$ is invertible $\vec{\ub}_h = - \Bbb{A}_{\ub}^{-1} \Bbb{B}^T \vec{\pf}_h + \Bbb{A}_{\ub}^{-1} \vec{\bs{f}}$ from the the first equation. If we use this to the second equation, then we get 
\algns{
  - \LRp{\Bbb{B} \Bbb{A}_{\ub}^{-1} \Bbb{B} + \Bbb{S} } \dot{\vec{\pf}}_h(t) - \Bbb{A}_{\pf} \vec{\pf}_h(t) = - \Bbb{B} \Bbb{A}_{\ub}^{-1} \dot{\vec{\bs{f}}}(t) - \vec{g}(t) .
}
This is a standard form of system of linear ordinary differential equations, and this equation has a unique solution if $\Bbb{B} \Bbb{A}_{\ub}^{-1} \Bbb{B} + \Bbb{S}$ is invertible. By Lemma~\ref{lemma:aux-inf-sup} the matrix 
\algns{
 \pmat{\Bbb{A}_{\ub} & \Bbb{B}^T \\ \Bbb{B} & -\Bbb{S} }
}
is invertible, so its Schur complement $-(\Bbb{B} \Bbb{A}_{\ub}^{-1} \Bbb{B} + \Bbb{S})$ is invertible as well. Therefore, \eqref{eq:semidiscrete-eqs} has a unique solution.
\end{proof}

\section{Error analysis of semidiscrete solutions}
\label{sec:error-analysis}

For error analysis we define $\Pi_h^{\Vb} : \Vb \to \Vb_h$ and $\Pi_h^{\Qf} : \Qf \to \Qh$ by 
%and solve for $\ub$ such that the following equation is satisfied:
%
\algns{
a_{\ub} \LRp{\Pi_h^{\Vb} \vb, \tilde{\vb}} &= a_{\ub} \LRp{\vb, \tilde{\vb}}  & &  \forall \tilde{\vb} \in \Vb_h , \\
a_p \LRp{\Pi_h^{\Qf} \qf, \tilde{\qf}} &= a_p \LRp{  \qf,  \tilde{\qf}} & &  \forall \tilde{\qf} \in \Qh .
}
It is known that 
\subeqns{eq:intp-ineq}{
  \label{eq:intp-u-ineq} \| \vb - \Pi_h^{\Vb} \vb \|_{a_{\ub}} \le Ch^m \| \vb \|_{m+1} , \\
  \label{eq:intp-p-ineq} \| q - \Pi_h^Q q \|_0 \le C \| \qf - \Pi_h^Q \qf \|_{h} \le Ch^m \| \qf \|_{m+1}, \\
  \label{eq:intp-jump-ineq} \sum_{e \in \mc{E}_h^0} h_e^{-1-\beta} \int_e \jump{\Pi_h^Q \qf }^2 \,ds \le C h^{2m} \| \qf \|_{m+1}^2 
}
in \cite{Mardal-Tai-Winther:2002,Tai-Winther:2006} and \cite{Lee-Ghattas}. Moreover, if $\Omega$ satisfies the full elliptic regularity assumption, then 
\algns{
  \| \qf - \Pi_h^Q \qf \|_0 \le Ch^{m+1} \| \qf \|_{m+1} .
}
\begin{theorem}
Suppose that $(\ub, \pf)$ and $(\ub_h, \pf_h)$ are the solutions of \eqref{eq:weak-up-eqs} and \eqref{eq:semidiscrete-eqs} with compatible numerical initial data $(\ub_h(0), \pf_h(0)) \in \Vb_h \times \Qh$ satisfying \eqref{eq:numerical-comp-data} and
\subeqns{eq:init-cond-approx}{
  2 \mu \| \ub(0) - \ub_h (0) \|_0^2 + \lambda \| \div (\ub(0) - \ub_h (0)) \|_0^2 &\le Ch^m \| \ub(0) \|_{m+1},  \\
  \| \pf(0) - \pf_h(0) \|_0, \| \pf(0) - \pf_h(0) \|_{1,h} &\le Ch^m \| \pf(0) \|_{m+1} .
}
Then, 
\algn{
  \notag &\| \ub(t) - \ub_h(t) \|_{\Vbh} + \| \pf(t) - \pf_h(t) \|_{0, s_0} \\
  \label{eq:up-error} &\le Ch^m \LRp{ \| \ub(0) \|_{m+1} + \| \pf(0) \|_{m+1} } + C \lambda^{-\half} h^m \| \pf \|_{W^{1,1}(0,t; H^{m+1})} \\
  \notag &\quad + Ch^m \| \ub \|_{W^{1,1}(0,t; H^{m+1})} 
}
and 
\mltln{ \label{eq:pH1-error}
  \| \pf(t) - \pf_h(t) \|_{1,h} \\
  \le Ch^m \LRp{\| p(0) \|_{m+1} + \lambda^{-\half} \| p \|_{W^{1,2}(0,t; H^{m+1})} + \| \ub \|_{W^{1,2}(0,t; H^{m+1})} } .
}
\end{theorem}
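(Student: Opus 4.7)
The estimate is obtained via the classical elliptic--projection approach for two-field poroelasticity, combined with a cross-term cancellation that delivers a Gr\"onwall-free energy identity.

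Begin by splitting the errors: $\eu = \eui + \euh$ with $\eui := \ub - \Pi_h^{\Vb} \ub$ and $\euh := \Pi_h^{\Vb} \ub - \ub_h$, and $\ept = \epti + \epth$ analogously with $\Pi_h^{\Qf}$. By \eqref{eq:intp-ineq} the interpolation parts $\eui, \epti$ are already $O(h^m)$, so only the discrete parts $\euh, \epth$ need to be controlled. Element-wise integration by parts in the strong form of \eqref{eq:strong-eqs} tested against $(\vb, q) \in \Vbh \times \Qh$, combined with \eqref{eq:semidiscrete-eqs}, the elliptic projection identities $a_\ub(\eui, \vb) = 0$, $a_p(\epti, q) = 0$, and the observation $S(\dot p, q) = 0$ (the continuous $p$ has vanishing interior jumps), yields
\begin{align*}
  a_\ub(\euh, \vb) - (\alpha \epth, \div \vb) &= (\alpha \epti, \div \vb) + E_{\ub}(\vb), \\
  (\alpha \div \deuh, q) + (s_0 \depth, q) + S(\depth, q) + a_p(\epth, q) &= F(q),
\end{align*}
where $F(q)$ collects the interpolation-driven terms $-(\alpha \div \deui, q) - (s_0 \depti, q) - S(\depti, q)$ together with the IOS-EG consistency residual $E_p(q)$, and $E_{\ub}$ is the MTW face residual arising from the Cauchy stress $\sigmab$ paired with the jumps $\jump{\vb}$ on $\mc{E}_h$.

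For \eqref{eq:up-error}, test the first error equation with $\vb = \deuh$ and the second with $q = \epth$, then add. The crucial observation $(\alpha \epth, \div \deuh) = (\alpha \div \deuh, \epth)$ cancels the cross-coupling, and the self-coupled terms reassemble as time derivatives to give
\begin{equation*}
  \tfrac{1}{2} \tfrac{d}{dt}\bigl[\|\euh\|_{a_\ub}^2 + \|\epth\|_{0, s_0}^2 + S(\epth, \epth)\bigr] + a_p(\epth, \epth) = (\alpha \epti, \div \deuh) + E_{\ub}(\deuh) + F(\epth).
\end{equation*}
Integrating from $0$ to $t$ and applying integration by parts in time to the two $\deuh$-integrals on the right transfers the time derivative off of $\euh$: the boundary-in-time terms pair $\euh(t)$ with $\epti(t)$ or with $\sigmab(t)$ and are absorbed into $\tfrac{1}{2}\|\euh(t)\|_{a_\ub}^2$ on the LHS via Young's inequality, while the remaining time integrals involve $\depti$ and $\partial_t \sigmab$ against $\euh, \epth$ and produce the $W^{1,1}(0,t; H^{m+1})$ norms of $\ub, p$ appearing in \eqref{eq:up-error}. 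Rewriting each pressure--divergence pairing in the weighted form $\lambda^{-1/2}\epti \cdot \lambda^{1/2} \div \euh$ delivers the robust $\lambda^{-1/2}$ prefactor in front of the $p$-contribution.

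For \eqref{eq:pH1-error}, differentiate the first error equation in $t$, test with $\vb = \deuh$, and test the second with $q = \depth$. The same cancellation applies and yields an identity with $\tfrac{d}{dt}\|\epth\|_{a_p}^2$ on the left and $L^2(0,t)$-in-time pairings of $\depti, \deui$ on the right; after integration these generate the $W^{1,2}(0,t; H^{m+1})$ norms of $\ub, p$ in the conclusion, and the equivalence $\|\cdot\|_{a_p} \sim \|\cdot\|_{1,h}$ on $\Qh$ together with \eqref{eq:intp-p-ineq} for $\epti$ closes the estimate via triangle inequality. The main technical obstacle is bounding the nonconforming residuals $E_{\ub}$ and $E_p$ by $C h^m$ times Sobolev norms of $(\ub, p)$: this exploits the strong continuity of normal traces of MTW functions together with the weak continuity of their tangential components, and the interior over-stabilization built into $a_p$ that controls the interior jumps of IOS-EG functions at the required rate. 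Once these residuals are controlled, the remainder of the proof is a standard energy argument that completely avoids Gr\"onwall.
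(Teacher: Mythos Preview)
Your strategy coincides with the paper's: elliptic-projection splitting, the test pair $(\vb,q)=(\deuh,\epth)$ with the cross-term cancellation, and integration by parts in time to move $\partial_t$ off $\euh$. Two points of contrast are worth flagging. First, you carry the nonconforming consistency residuals $E_{\ub}$ (MTW face terms) and $E_p$ explicitly, whereas the paper's error equations are written with vanishing right-hand side, tacitly omitting them; you are being more careful here, and your remark on exploiting normal continuity plus weak tangential continuity of MTW is exactly what is needed to bound $E_{\ub}$ by $Ch^m$. Second, for the Gr\"onwall-free step the paper uses an explicit $\max$ argument: it assumes $X(t)=\max_{0\le s\le t}X(s)$, bounds every time-integral term by a multiple of $X(t)$, divides through by $X(t)$, and then handles the general case by monotonicity in $t$ of the right-hand side. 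Your Young's-inequality treatment is fine for the boundary-in-time terms, but for the integrals of $\depti,\deui$ against $\euh(s),\epth(s)$ you still need to pull out $\sup_s X(s)$ and absorb it on the left---this is the paper's $\max$ trick in other words, and without it Young alone would either force Gr\"onwall or degrade the $W^{1,1}$ norms to $W^{1,2}$. Finally, the paper's written proof stops after establishing \eqref{eq:up-error}; your sketch for \eqref{eq:pH1-error} (differentiate the first error equation, test with $(\deuh,\depth)$, use $a_p(\epth,\depth)=\tfrac12\tfrac{d}{dt}a_p(\epth,\epth)$) is the natural route and correctly produces the $W^{1,2}$ norms via the $\int_0^t\|\deuh\|_{a_\ub}^2$ term on the left absorbing the Young remainders.
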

\begin{proof}

Let $e_{\ub} (t):= \ub (t) - \ub_h(t)$ and $e_{\pf}(t) := \pf (t) - \pf_h (t)$. 
Now, subtracting \eqref{eq:semidiscrete-eqs} from \eqref{eq:weak-up-eqs}, we can derive our error equations as 
\subeqns{eq:error-form}{
\label{eq:error-form1} a_{\ub} \LRp{e_{\ub} (t), \vb} - \LRp{\alpha e_{\pf} (t), \div \vb}_{\Omega} &= 0 , \\
\label{eq:error-form2} - \LRp{\alpha \div \dot{e}_{\ub} (t), \qf}_{\Omega} - S\LRp{\dot{e}_{\pf}(t), q } - \LRp{s_0 \dot{e}_{\pf} (t), \qf}_{\Omega} + a_p \LRp{\epf (t), \qf} &= 0  
}
for all $\vb_h \in \Vb_h$, $\qf \in \Qh$. Defining 
\algns{
e_{\ub}^h (t) &:= \Pi_h^{\Vb} \ub (t) - \ub_h (t), & e_{\ub}^I (t) &:= \ub (t) - \Pi_h^{\Vb} \ub (t) , \\
e_{\pf}^h (t) &:= \Pi_h^{\Qf} \pf (t) - \pf_h (t), & e_{\pf}^I (t) &:= \pf (t) - \Pi_h^{\Qf} \pf (t) ,
}
one can see that $e_{\ub}(t) = e_{\ub}^I (t) + e_{\ub}^h (t)$, %so the error terms in \eqref{eq:error-form1} can be rewritten as , 
%and the interpolation error in  \eqref{eq:error-form1} becomes 0 because
%
%\algns{
and $a_{\ub} \LRp{e_{\ub}^I (t), \vb} = 0$ for all $\vb \in \Vb_h$ by the definition of $\Pi_h^{\Vb}$. 
%}
%
Thus, we can rewrite \eqref{eq:error-form} as 
\subeqns{eq:new-error-form}{
\label{eq:new-error-form1} a_{\ub} \LRp{e_{\ub}^h (t), \vb} - \LRp{\alpha \LRp{e_{\pf}^h (t) + e_{\pf}^I (t)}, \div \vb}_{\Omega} &= 0 , \\
\notag - \LRp{\alpha \div(\dot{e}_{\ub}^h (t) + \dot{e}_{\ub}^I (t)), \qf}_{\Omega} - S(\dot{e}_{\pf}^h (t) + \dot{e}_{\pf}^I(t) , q) & \\
\label{eq:new-error-form2} - \LRp{s_0 (\dot{e}_{\pf}^h (t) + \dot{e}_{\pf}^I (t)), \qf}_{\Omega} - a_p \LRp{e_{\pf}^h (t), \qf} &= 0 
}
for $(\vb, \qf) \in \Vb_h \times \Qf_h$. 
For our error analysis take $\vb = \dot{e}_{\ub}^h (t)$ and $\qf = -e_{\pf}^h (t)$ and add \eqref{eq:new-error-form1} and \eqref{eq:new-error-form2} to obtain the following:
\mltln{
\label{eq:error1} a_{\ub} \LRp{e_{\ub}^h (t), \dot{e}_{\ub}^h (t)} - \LRp{\alpha e_{\pf}^I (t), \div \dot{e}_{\ub}^h (t)}_{\Omega} \\
+ \LRp{\alpha \div \dot{e}_{\ub}^I (t), e_{\pf}^h (t)}_{\Omega} + S\LRp{\dot{e}_{\pf}^h (t) + \dot{e}_{\pf}^I (t), e_{\pf}^h (t)} \\
+ \LRp{s_0 (\dot{e}_{\pf}^h (t) + \dot{e}_{\pf}^I (t)), e_{\pf}^h (t)}_{\Omega} + a_p \LRp{e_{\pf}^h (t), e_{\pf}^h (t)} = 0 .
}
We can rewrite this as 
\mltln{
\label{eq:error2} \half \ddt a_{\ub} \LRp{\euh (t), \euh (t)} + \half \ddt \LRp{s_0 \epfh (t), \epfh (t)}_{\Omega} \\
+ a_p \LRp{\epfh (t), \epfh (t)}  + \half \ddt S\LRp{ \epfh(t), \epfh(t)} \\
= \LRp{\alpha \epfi (t), \div \dot{e}_{\ub}^h (t)}_{\Omega} - \LRp{\alpha \div \dot{e}_{\ub}^I (t), \epfh (t)}_{\Omega} - S\LRp{ \depfi(t), \epfh(t)} .
}
Let us define $X(t) \ge 0$ by 
\algns{
X(t)^2 := a_{\ub} \LRp{\euh (t), \euh (t)} + (s_0 \epfh (t), \epfh (t))_{\Omega} + S \LRp{\epfh (t), \epfh (t)}. 
}
For the proof of \eqref{eq:up-error} we first prove  
\mltln{
\label{eq:error-bound1} X(t) \le X(0) + C \lambda^{-\half} \max_{0 \le s \le t} \| \epfi (s) \|_0  \\ 
 + C \LRp{ \lambda^{-\half} \int_0^t \| \depfi (s) \|_0 \, ds + \int_0^t \| \div \deui (s) \|_0 \, ds} 
}
for any $t >0$. 

To prove it, we integrate \eqref{eq:error2} from 0 to $t$ and get
\mltln{
\label{eq:error3} \half X(t)^2 - \half X(0)^2 + \int_0^t a_p \LRp{\epfh (s), \epfh (s)} \, ds \\
= \int_0^t  \LRp{\alpha \epfi (s), \div \deuh (s)}_{\Omega} \, ds - \int_0^t \LRp{\alpha \div \depfi (s), \epfh (s)}_{\Omega} \, ds  - S\LRp{ \depfi(t), \epfh(t)} .
}
By the integration by parts in $t$, 
\algns{
\int_0^t \LRp{\alpha \epfi (s), \div \deuh (s)}_{\Omega} \, ds &=  \LRp{\alpha \epfi (t), \div \euh (t)}_{\Omega} - \LRp{\alpha \epfi (0), \div \euh (0)}_{\Omega} \\
&\quad - \int_0^t \LRp{\alpha \depfi (s), \div \euh (s)}_{\Omega} \, ds .
}
Plugging this into \eqref{eq:error3} yields
\algn{
\notag &\half X(t)^2 + \int_0^T a_p \LRp{\epfh (s), \epfh (s)} \, ds \\ 
\label{eq:error4} &= \half X(0)^2 + \LRp{\alpha \epfi (t), \div \euh (t)}_{\Omega} - \LRp{\alpha \epfi (0), \div \euh (0)}_{\Omega} \\
\notag &\quad - \int_0^T \LRp{\alpha \depfi (s), \div \euh (s)}_{\Omega} \, ds - \int_0^T \LRp{\alpha \div \depfi (s), \epfh (s)}_{\Omega} \, ds \\
\notag &\quad - S\LRp{ \depfi(t), \epfh(t)} .
}
We now prove \eqref{eq:error-bound1} assuming that
\algn{ \label{eq:Xmax-assumption}
X(t) =  \max_{0 \le s \le t} X(s) .
}
%
%The previous equation can be written as the following:
%
% \algn{
% \label{eq:error5} \half X(t)^2 + \int_0^T a_p \LRp{\epfh (s), \epfh (s)} \, ds =\\
% \notag \half X(0)^2 + \alpha \LRp{\epfi (T), \div \euh (T)} - \alpha \LRp{\epfi (0), \div \euh (0)} \\ \notag - \int_0^T \alpha \LRp{\depfi (s), \div \euh (s)} \, ds - \int_0^T \alpha \LRp{\div \depfi (s), \epfh (s)} \, ds
% }
%
With this assumption we can bound some terms in \eqref{eq:error4} as follows:
\algn{
\label{eq:bound1} \left|\LRp{\alpha \epfi (t), \div \euh (t)}_{\Omega}\right| &\le C \lambda^{-\half} \| \epfi (t) \|_0 X(t) , \\
\label{eq:bound2} \left|\LRp{\alpha \epfi (0), \div \euh (0)}_{\Omega}\right| &\le C \lambda^{-\half} \| \epfi (0) \|_0 X(0) , \\
\label{eq:bound3} \left| \int_0^t \LRp{\alpha \depfi (s), \div \euh (s)}_{\Omega} \, ds \right| &\le C \lambda^{-\half} \int_0^t \| \depfi (s) \|_0 \, ds X(t) , \\
\label{eq:bound4} \left| \int_0^t \LRp{\alpha \div \depfi (s), \epfh (s)}_{\Omega} \, ds \right| &\le C \int_0^T \| \div \deui (s) \|_0 \, ds X(t) .
}
Plugging these into \eqref{eq:error4}, we derive 
\mltlns{ %\label{eq:error-bound1a} 
	X(t)^2 + 2 \int_0^t a_p \LRp{\epfh (s), \epfh (s)} \, ds \le X(0)^2 + C \lambda^{-\half} \| \epfi (0) \|_0 X(0) \\ 
	+ C \LRp{\lambda^{-\half} \| \epfi (t) \|_0 + \lambda^{-\half} \int_0^t \| \depfi (s) \|_0 \, ds + \int_0^T \| \div \deui (s) \|_0 \, ds} X(t) .
}
If we divide by $X(t)$, then we can obtain \eqref{eq:error-bound1}. For general $t$ such that \eqref{eq:Xmax-assumption} is not true, note that there exists $0 \le t_M<t$ such that 
\algns{
 X(t_M) = \max_{0 \le s \le t_M} X(s) , \qquad X(t) < X(t_M) . 
}
By the same argument as before for $t_M$ we can obtain 
\mltln{
\label{eq:error-bound2} X(t_M) \le X(0) + C \lambda^{-\half} \| \epfi (0) \|_0  \\ 
 + C \LRp{\lambda^{-\half} \| \epfi (t_M) \|_0 + \lambda^{-\half} \int_0^{t_M} \| \depfi (s) \|_0 \, ds + \int_0^{t_M} \| \div \deui (s) \|_0 \, ds} .
}
We can now derive \eqref{eq:error-bound1} by  
\algns{
X(t) < X(t_M), \qquad \max_{0 \le s \le t_M} \| \epfi (s) \|_0 \le \max_{0 \le s \le t} \| \epfi (t) \|_0, 
}
and
\mltlns{
  \lambda^{-1} \int_0^{t_M} \| \depfi (s) \|_0 + \int_0^{t_M} \| \div \deui (s) \|_0 \, ds \\
  \le \lambda^{-1} \int_0^{t} \| \depfi (s) \|_0 + \int_0^{t} \| \div \deui (s) \|_0 \, ds .
}
Then, \eqref{eq:up-error} follows by the triangle inequality, \eqref{eq:error-bound1}, \eqref{eq:init-cond-approx}, and the interpolation error bounds \eqref{eq:intp-ineq}. 
\end{proof}

\section{Preconditioners for fast solver algorithms}
\label{sec:preconditioning}
In this section we present an abstract form of preconditioners for our discretization of poroelasticity equations by adopting the operator preconditioning framework in \cite{Mardal-Winther:2011}. 

After the Crank-Nicolson time discretization we need to solve the static equation
\algns{
\mathcal{B} ((\ub^{n+1}, \pf^{n+1}), ({\vb}, {\qf})) = \mathcal{F}^{n+1} ({\vb}, {\qf}), \qquad ({\vb}, {\qf}) \in \Vbh \times \Qh
}
where $\mathcal{F}^{n+1}$ is determined by the previous time step solution $(\ub^{n}, \pf^{n})$ and the right-hand side terms, and $\mathcal{B} : (\Vbh \times \Qh) \times (\Vbh \times \Qh) \ra \R$ is defined by 
\algns{
  \mathcal{B}((\vb, \qf), (\tilde{\vb}, \tilde{\qf})) &= a_{\ub} (\vb, \tilde{\vb}) - \LRp{ \alpha \qf, \div \tilde{\vb} }_{\Omega} - \LRp{ \alpha \tilde{\qf}, \div \vb }_{\Omega} - \LRp{ s_0 \qf, \tilde{\qf}}_{\Omega} - S\LRp{ \qf, \tilde{\qf}}  \\
  &\quad - \frac{\Delta t}2 a_p(\qf, \tilde{\qf}) .
}
Let  $\tilde{Q}_h$ be the space $\Qh$ with the norm 
\algns{
\| \qf \|_{\tilde{Q}_h}^2 = \| \qf^c \|_{0,\laminv}^2 + \| \qf^c \|_{0,s_0}^2 + \| \qf^0 \|_{0,\laminv}^2 + \| \qf^0 \|_{0,s_0}^2 + S(\qf^0, \tilde{\qf}^0) + \frac{\Delta t}2 \| \qf \|_h^2 .
}
Where $q^0, \tilde{q}^0$ are mean-value zero on $\Omega$. For simplicity let us define $\| (\vb, \qf) \|_{\mathcal{X}}$ by
\algns{
 \| (\vb, \qf) \|_{\mathcal{X}}^2 = \| \vb \|_{\Vbh}^2 + \| \qf \|_{\tilde{Q}_h}^2  .
}
We will prove that the linear operator $\mathcal{L}_{\mathcal{B}} : \Vbh \times \tilde{Q}_h \ra (\Vbh \times \tilde{Q}_h)^*$ defined by 
\algn{ \label{eq:L_B-definition}
\LRa{\mathcal{L}_{\mathcal{B}} (\vb, \qf), (\tilde{\vb}, \tilde{\qf})}_{\LRp{(\Vbh \times \tilde{Q}_h)^*, \Vbh \times \tilde{Q}_h}} := \mathcal{B} ((\vb, \qf), (\tilde{\vb}, \tilde{\qf})), 
}
is a bounded isomorphism from $\Vbh \times \tilde{Q}_h$ to the dual space $(\Vbh \times \tilde{Q}_h)^*$. Then, a parameter-robust preconditioner can be constructed as the inverse of the block diagonal operator 
\algn{ \label{eq:preconditioner} 
\mathcal{P} = 
\begin{pmatrix}
  \mathcal{P}_{\Vbh} & 0 \\ 0 & \mathcal{P}_{\tilde{Q}_h}
\end{pmatrix}
}
such that $\mathcal{P}_{\Vbh}:\Vbh \ra \Vbh^*$ and $\mathcal{P}_{\tilde{Q}_h}:\tilde{Q}_h \ra \tilde{Q}_h^*$ are the bounded linear operators defined by 
\algns{
  \LRa{\mathcal{P}_{\Vbh} \vb, \tilde{\vb} }_{(\Vbh^*, \Vbh)} &:= a_{\ub} (\vb, \tilde{\vb}), \\
  \LRa{\mathcal{P}_{\tilde{Q}_h} \qf, \tilde{\qf} }_{(\tilde{Q}_h^*, \tilde{Q}_h)} &:= \LRp{s_0 \qf^c, \tilde{\qf}^c }_{\Omega} + \LRp{\laminv \qf^c, \tilde{\qf}^c }_{\Omega} \\
  &\quad + \frac{\Delta t}2 \LRp{  \int_{\Omega} \kapb \nabla q^c \cdot \nabla q^c \, dx + \sum_{e \in \mc{E}_h^{\pd} } \gamma h_e^{-1} \LRa{ \jump{q^c}, \jump{\tilde{q}^c} }_e } \\
  &\quad + \LRp{s_0 \qf^0, \tilde{\qf}^0 }_{\Omega} + \LRp{\laminv \qf^0, \tilde{\qf}^0 }_{\Omega} + S(\qf^0, \tilde{\qf}^0) \\
  &\quad + \frac{\Delta t}2 \LRp{ \sum_{e \in \mc{E}_h^{0} } \gamma h_e^{-1-\beta} \LRa{\jump{q^0}, \jump{\tilde{\qf}^0} }_e  + \sum_{e \in \mc{E}_h^{\pd} } \gamma h_e^{-1} \LRa{ \jump{q^0}, \jump{\tilde{\qf}^0} }_e } . 
}
If $\mathcal{L}_{\mathcal{B}} : \Vbh \times \tilde{Q}_h \ra (\Vbh \times \tilde{Q}_h)^*$ is an isomorphism, then a preconditioner of the form \eqref{eq:preconditioner} is expected to give parameter-robust preconditioners based on norm equivalence (cf. \cite{Mardal-Winther:2011}). 

By the definition \eqref{eq:L_B-definition}, $\mathcal{L}_{\mathcal{B}} : \Vbh \times \tilde{Q}_h \ra (\Vbh \times \tilde{Q}_h)^*$ is a bounded isomorphism independent of $h, \dt, \lambda$ if $\mathcal{B}$ is a bounded bilinear form on $\Vbh \times \tilde{Q}_h$, and the following inf-sup condition holds: There exists $C>0$ independent of $h$, $\lambda$, $\Delta t$ such that 
\algn{ \label{eq:preconditioning-inf-sup}
	\inf_{(\tilde{\vb}, \tilde{\qf}) \in \Vbh \times \Qh} \sup_{(\vb, \qf) \in \Vbh \times \Qh} \frac{\mathcal{B}((\vb, \qf), (\tilde{\vb}, \tilde{\qf})) }{ \| (\bs{v}, q) \|_{\mathcal{X}} \| (\tilde{\vb}, \tilde{q}) \|_{\mathcal{X}}  } \ge C > 0 .
}
The main result in this section is proving the inf-sup condition \eqref{eq:preconditioning-inf-sup}. As a consequence, $\mathcal{L}_{\mathcal{B}}$ is an isomorphism, so preconditioners of the form \eqref{eq:preconditioner} give robust preconditioners. 

Before we prove this we state a necessary result which is proved in \cite{Lee-Ghattas}. 
\begin{lemma} \label{lemma:ap-inf-sup}
  For given $\tilde{\qf} = \tilde{\qf}^c + \tilde{\qf}^0$,  one can find $r^0 \in \Qh^0$ satisfying
  \algn{ 
   	\label{eq:r0-ineq0} & \sum_{e\in \mathcal{E}_h^{0}} \LRa{ h_e^{-1}  \jump{r^0}, \jump{r^0} }_e  \le C_r \sum_{e \in \mathcal{E}_h^0} \LRa{ h_e^{-1} \jump{\tilde{\qf}^0}, \jump{\tilde{\qf}^0} }_e , \\
   	\label{eq:r0-ineq1} &\sum_{e \in \mathcal{E}_h^{\pd}} \LRa{ h_e^{-1} \jump{r^0}, \jump{r^0} }_e +  \sum_{e\in \mathcal{E}_h^{0}} \LRa{ h_e^{-1-\beta}  \jump{r^0}, \jump{r^0} }_e \\
    \notag &\quad \le C_r \sum_{e \in \mathcal{E}_h^0} \LRa{ h_e^{-1-\beta} \jump{\tilde{\qf}^0},  \jump{\tilde{\qf}^0} }_e , \\
  	\label{eq:r0-ineq2} & \| r^0 \|_{0} \le C_r \| \tilde{\qf}^0 \|_0 ,
  }
  for some $C_r > 0$ independent of $h$, and $\qf = \tilde{\qf} + \delta r^0$ with sufficiently small (but independent of $h$) $\delta>0$ satisfies
  \algn{ \label{eq:eg-inf-sup}
    a_p(\qf, \tilde{\qf}) \ge C_{EG} \| \qf \|_h^2 
    %\inf_{ \tilde{\qf} \in \Qh } \sup_{\qf \in \Qh} \frac{a_p(\qf, \tilde{\qf}) }{ \| q \|_{h} \| \tilde{q} \|_{h}  } \ge C_{EG} > 0 
  }
  with $C_{EG}$ depending on the choice of $\delta$. In other words, the inf-sup condition
  \algns{
    \inf_{ \tilde{\qf} \in \Qh } \sup_{\qf \in \Qh} \frac{a_p(\qf, \tilde{\qf}) }{ \| q \|_{h} \| \tilde{q} \|_{h}  } \ge C_{EG}' > 0 
  }
  holds with $C_{EG}'$ independent of $h$.
\end{lemma}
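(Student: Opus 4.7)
The plan is to test the bilinear form with $q = \tilde{q} + \delta r^0$ for a suitable discrete lifting $r^0 \in \Qh^0$ of the interior-edge jump structure of $\tilde{q}^0$. Once \eqref{eq:r0-ineq0}--\eqref{eq:r0-ineq2} are in hand, the identity $(r^0)^c \equiv 0$ combined with \eqref{eq:r0-ineq1} gives $\| r^0 \|_h \le C \| \tilde{q} \|_h$, so $\| q \|_h \le (1 + \delta C) \| \tilde{q} \|_h$ and it suffices to prove the sharper bound $a_p(q, \tilde{q}) \ge C \| \tilde{q} \|_h^2$; the inf-sup condition then follows by choosing this $q$ as the sup test function.

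First I would expand $a_p(\tilde{q}, \tilde{q})$. The conforming diffusion produces $\| \kapb^{1/2} \nabla \tilde{q}^c \|_0^2$, the interior over-stabilization produces $\gamma \kapb_{avg}\sum_{e \in \mc{E}_h^0} h_e^{-1-\beta}\|\jump{\tilde{q}^0}\|_e^2$, and the boundary stabilization produces $\gamma \kapb_{avg}\sum_{e \in \mc{E}_h^{\pd}} h_e^{-1}\|\tilde{q}^c + \tilde{q}^0\|_e^2$. The two consistency cross terms are absorbed via Young's inequality and the discrete trace inequality: on an interior edge $e \subset \partial K$,
\[
|\LRa{\kapb \avg{\nabla \tilde{q}^c}, \jump{\tilde{q}^0}}_e|
\le \varepsilon h_e \| \kapb^{1/2} \nabla \tilde{q}^c \|_e^2 + \varepsilon^{-1} h_e^{-1} \| \jump{\tilde{q}^0} \|_e^2,
\]
whose first summand is bounded by $\varepsilon \| \kapb^{1/2} \nabla \tilde{q}^c \|_K^2$ and absorbed into the diffusion, while the second is $h_e^\beta$ times the interior stabilization weight, hence small for $\beta \ge 1$ and absorbed into $\gamma h_e^{-1-\beta}\|\jump{\tilde{q}^0}\|_e^2$ once $\gamma$ is large. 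The same trick handles the boundary cross term. The outcome is coercivity of every ingredient of $\| \tilde{q} \|_h^2$ \emph{except} the separated boundary contributions $h_e^{-1} \| \jump{\tilde{q}^c} \|_e^2$ and $h_e^{-1} \| \jump{\tilde{q}^0} \|_e^2$ individually --- only the joint quantity $h_e^{-1}\|\tilde{q}^c + \tilde{q}^0\|_e^2$ is controlled.

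The role of $r^0$ is to supply this missing separation. I would construct $r^0$ in the spirit of the Buffa--Ortner reconstruction used in Lemma~\ref{lemma:S-inequality} (see also \cite{Buffa-Ortner:2009}), choosing $r^0 \in \Qh^0$ so that its interior-edge jumps are proportional to those of $\tilde{q}^0$ while its boundary-face traces pick up a multiple of $\tilde{q}^0|_e$. The three estimates \eqref{eq:r0-ineq0}--\eqref{eq:r0-ineq2} then follow by standard scaling on a reference patch together with the finite overlap of element neighborhoods. Computing $\delta a_p(r^0, \tilde{q})$, only three families of terms survive because $(r^0)^c \equiv 0$: the consistency $-\LRa{\jump{r^0}, \avg{\kapb \nabla \tilde{q}}}$, the interior stabilization pairing $\gamma \kapb_{avg}\sum_{\mc{E}_h^0} h_e^{-1-\beta} \LRa{\jump{r^0}, \jump{\tilde{q}^0}}_e$, and the boundary coupling $\gamma \kapb_{avg}\sum_{\mc{E}_h^{\pd}} h_e^{-1} \LRa{\jump{r^0}, \tilde{q}^c + \tilde{q}^0}_e$. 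The first two are controlled via \eqref{eq:r0-ineq0}--\eqref{eq:r0-ineq1}, Young's inequality, and the quantities already dominated above; the third, combined with the joint boundary term already in hand, yields separate lower bounds on $h_e^{-1}\|\jump{\tilde{q}^c}\|_e^2$ and $h_e^{-1}\|\jump{\tilde{q}^0}\|_e^2$ through the algebraic inequality $(a + b)^2 + \eta b^2 \ge C(a^2 + b^2)$ for $\eta$ large.

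Fixing $\delta > 0$ small enough that none of the $\delta$-scaled cross terms destroys the coercivity established above concludes the argument. The main obstacle is the construction of $r^0$: one needs a very specific interplay between the interior-edge jumps, the boundary-edge traces, and the $L^2$ size of $r^0$ so that all three inequalities \eqref{eq:r0-ineq0}--\eqref{eq:r0-ineq2} hold simultaneously \emph{and} the boundary coupling in $a_p(r^0, \tilde{q})$ provides the separation missing from testing with $\tilde{q}$ alone. Once that construction is written down, the scaling estimates and cross-term absorptions are routine.
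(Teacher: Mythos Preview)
The paper does not supply its own proof of this lemma; it is quoted as a result from \cite{Lee-Ghattas}. Your overall strategy---test with $q = \tilde q + \delta r^0$, absorb the cross terms by Young's inequality and discrete trace estimates---is the standard template for such inf-sup conditions, and your diagnosis that $a_p(\tilde q,\tilde q)$ controls every ingredient of $\|\tilde q\|_h^2$ \emph{except} the separated boundary contributions $h_e^{-1}\|\tilde q^c\|_e^2$ and $h_e^{-1}\|\tilde q^0\|_e^2$ is accurate.

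Two points in the mechanism need repair, though. First, the consistency term you list in $a_p(r^0,\tilde q)$, namely $-\langle\jump{r^0},\avg{\kapb\nabla\tilde q}\rangle$, does not survive: in \eqref{eq:a_p-form} that slot carries $\jump{q^c}$, not $\jump{q}$, and with $(r^0)^c$ constant the term vanishes on interior faces and contributes at most the mean of $r^0$ on boundary faces. Only the interior stabilization pairing and the boundary coupling actually remain. Second---and this is the real gap---your plan to let the boundary traces of $r^0$ ``pick up a multiple of $\tilde q^0|_e$'' so that $(a+b)^2+\eta b^2\ge C(a^2+b^2)$ can be applied edge by edge is incompatible with the stated bound \eqref{eq:r0-ineq1}: that inequality forces $\sum_{e\in\mc{E}_h^\partial}h_e^{-1}\|r^0\|_e^2$ to be dominated by the \emph{interior} over-stabilization $\sum_{e\in\mc{E}_h^0}h_e^{-1-\beta}\|\jump{\tilde q^0}\|_e^2$, not by the boundary values of $\tilde q^0$. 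When $\tilde q^0$ is the piecewise-constant approximation of a smooth mean-zero function its interior jumps are $O(h)$ while its boundary values are $O(1)$, so any $r^0$ obeying \eqref{eq:r0-ineq1} is too small on $\partial\Omega$ to supply a fixed $\eta>0$. The boundary separation therefore has to be extracted \emph{globally} from the over-stabilization weight $h_e^{-1-\beta}$ with $\beta\ge1$ (this is exactly where the ``over'' is essential), not locally from the boundary coupling; the construction of $r^0$ in \cite{Lee-Ghattas} is tailored to that route rather than the one you sketch.
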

We can now prove the main result.
\begin{theorem} \label{thm:preconditioning-inf-sup}
  The inf-sup condition \eqref{eq:preconditioning-inf-sup} holds.
\end{theorem}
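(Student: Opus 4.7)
My plan is to mirror the construction used in the stability Lemma~\ref{lemma:aux-inf-sup} for the earlier bilinear form $B$, enriching the test function with an extra perturbation that captures the new dissipative term $-\frac{\dt}{2} a_p(q,\tilde{q})$ in $\mathcal{B}$. Given $(\tilde{\vb}, \tilde{q}) \in \Vbh \times \Qh$, decompose $\tilde{q} = \tilde{q}^c + \tilde{q}^0$ with $\tilde{q}^0$ of mean zero, produce $\wb \in \Vbh$ via Lemma~\ref{lemma:weak-inf-sup} (rescaled so $\|\wb\|_{\Vbh} = \|\tilde{q}\|_{0,\laminv}$), and produce $r^0 \in \Qh^0$ via Lemma~\ref{lemma:ap-inf-sup} so that $a_p(\tilde{q}+\delta_2 r^0, \tilde{q}) \ge C_{EG}\|\tilde{q}+\delta_2 r^0\|_h^2$. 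Then I would take $(\vb, q) = (\tilde{\vb} \pm \delta_1 \wb,\; -\tilde{q} - \delta_2 r^0)$, with the sign in front of $\wb$ chosen so that $\delta_1(\alpha\tilde{q}, \div \wb)_\Omega \ge 0$.

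Expanding $\mathcal{B}((\vb, q), (\tilde{\vb}, \tilde{q}))$, the two copies of $(\alpha\tilde{q}, \div\tilde{\vb})_\Omega$ coming from $-(\alpha q, \div\tilde{\vb})_\Omega$ and $-(\alpha\tilde{q},\div\vb)_\Omega$ cancel, and the diagonal contributions yield $\|\tilde{\vb}\|_{\Vbh}^2 + \|\tilde{q}\|_{0,s_0}^2 + S(\tilde{q}^0, \tilde{q}^0) + \frac{\dt}{2} a_p(\tilde{q}+\delta_2 r^0, \tilde{q})$. Lemma~\ref{lemma:weak-inf-sup} then provides a gain $\delta_1(\alpha\tilde{q}, \div\wb)_\Omega \gtrsim \delta_1\|\tilde{q}\|_{0,\laminv}^2 - \delta_1 S(\tilde{q}^0,\tilde{q}^0)$, and Lemma~\ref{lemma:ap-inf-sup} together with the $r^0$-bounds there gives $\frac{\dt}{2} a_p(\tilde{q}+\delta_2 r^0, \tilde{q}) \gtrsim \frac{\dt}{2}\|\tilde{q}\|_h^2$. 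The cross-terms $\delta_1 a_{\ub}(\wb, \tilde{\vb})$, $\delta_2\alpha(r^0, \div\tilde{\vb})_\Omega$, $\delta_2(s_0 r^0, \tilde{q})_\Omega$, $\delta_2 S(r^0, \tilde{q}^0)$ are handled by Cauchy--Schwarz and Young's inequality, using the bounds $\|r^0\|_{0,\laminv} + \|r^0\|_{0,s_0} \lesssim S(\tilde{q}^0, \tilde{q}^0)^{1/2}$ (obtained from Lemma~\ref{lemma:ap-inf-sup} combined with the discrete Poincar\'e estimate in Lemma~\ref{lemma:discrete-poincare}) and the scaling $\|\wb\|_{\Vbh}=\|\tilde{q}\|_{0,\laminv}$. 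Choosing $\delta_1, \delta_2$ small but independent of $h, \lambda, \dt, s_0$ delivers
\begin{equation*}
\mathcal{B}((\vb, q), (\tilde{\vb}, \tilde{q})) \ge C\left(\|\tilde{\vb}\|_{\Vbh}^2 + \|\tilde{q}\|_{0,\laminv}^2 + \|\tilde{q}\|_{0,s_0}^2 + S(\tilde{q}^0, \tilde{q}^0) + \tfrac{\dt}{2}\|\tilde{q}\|_h^2\right).
\end{equation*}

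To match the full $\tilde{Q}_h$-norm, I would invoke Lemma~\ref{lemma:discrete-poincare} to absorb the mean-zero terms $\|\tilde{q}^0\|_{0,\laminv}^2$ and $\|\tilde{q}^0\|_{0,s_0}^2$ into $S(\tilde{q}^0,\tilde{q}^0)$, and then use the triangle inequality with the $\|\tilde{q}\|_{0,\laminv}^2$, $\|\tilde{q}\|_{0,s_0}^2$ pieces to recover the continuous-part norms $\|\tilde{q}^c\|_{0,\laminv}^2 + \|\tilde{q}^c\|_{0,s_0}^2$. The companion bound $\|(\vb, q)\|_{\mathcal{X}} \le C\|(\tilde{\vb}, \tilde{q})\|_{\mathcal{X}}$ is then routine: $\|\wb\|_{\Vbh}=\|\tilde{q}\|_{0,\laminv}$ handles the displacement part, while for $r^0 \in \Qh^0$ one has $(r^0)^c=0$, so only the jump and $L^2$ contributions of $\|r^0\|_{\tilde{Q}_h}$ enter, and each is controlled by the corresponding quantity for $\tilde{q}^0$ through Lemma~\ref{lemma:ap-inf-sup} and Lemma~\ref{lemma:discrete-poincare}.

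The main obstacle is keeping the absorption constants uniform in $h, \lambda, \dt, s_0$ simultaneously. The gain $\frac{\dt}{2}a_p(\tilde{q}+\delta_2 r^0, \tilde{q}) \gtrsim \frac{\dt}{2}\|\tilde{q}\|_h^2$ is $\dt$-weighted and therefore too weak to absorb $\dt$-free cross-terms, so each $\delta_2$-cross-term (in particular $\delta_2(s_0 r^0, \tilde{q})_\Omega$ and $\delta_2\alpha(r^0, \div\tilde{\vb})_\Omega$) must be controlled by the $\dt$-free diagonal pieces $\|\tilde{q}\|_{0,s_0}^2$, $S(\tilde{q}^0,\tilde{q}^0)$, and $\|\tilde{\vb}\|_{\Vbh}^2$ via the $\tilde{q}^0$-bounds on $r^0$, not by the $a_p$ piece; conversely, the $\delta_1$-cross-terms must not reach into the $a_p$-derived gain. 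Keeping this bookkeeping straight across the four parameters is the delicate part.
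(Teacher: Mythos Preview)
Your proposal is correct and follows essentially the same route as the paper's proof: the paper also takes $(\vb,q)=(\tilde{\vb}+\delta_1\wb_1,\,-\tilde{q}-\delta_0 r^0)$ with $\wb_1$ from Lemma~\ref{lemma:weak-inf-sup} rescaled so that $\|\wb_1\|_{\Vbh}=\|\tilde{q}\|_{0,\laminv}$ and $r^0$ from Lemma~\ref{lemma:ap-inf-sup}, then bounds exactly the same cross terms via Young's inequality and Lemma~\ref{lemma:discrete-poincare}. The only cosmetic differences are that the paper uses the parallelogram law mid-proof (rather than the triangle inequality at the end) to isolate $\|\tilde{q}^c\|_{0,\laminv}^2$, and your caution about the sign of the $\wb$-perturbation is in fact warranted to get the $+\delta_1(\alpha\tilde{q},\div\wb)_\Omega$ contribution with the correct sign.
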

\begin{proof}[Proof of Theorem~\ref{thm:preconditioning-inf-sup}]
For given $(\tilde{\vb}, \tilde{\qf}) \in \Vbh \times \Qh$ it suffices to show that there exists $(\vb, \qf) \in \Vbh \times \Qh$ such that 
\algns{
  \| (\vb, \qf) \|_{\mathcal{X}} &\le C_5  \| (\tilde{\vb}, \tilde{q}) \|_{\mathcal{X}} , \\
  \mathcal{B}((\vb, \qf), (\tilde{\vb}, \tilde{\qf})) &\ge C_6  \| (\tilde{\vb}, \tilde{q}) \|_{\mathcal{X}}^2
}
for some constants $C_5, C_6 >0$ independent of $h$, $\lambda$, $\Delta t$. 

Suppose that $(0,0)\not = (\tilde{\vb}, \tilde{\qf}) \in \Vbh \times \Qh$ is given. 
By Lemma~\ref{lemma:ap-inf-sup}, there exists $r^0 \in \Qh^0$ satisfying \eqref{eq:r0-ineq1}, \eqref{eq:r0-ineq2} such that $a_p(\tilde{\qf} + \delta_0 r^0, \tilde{\qf}) \ge C_3 \| \tilde{\qf} \|_h^2$ and $\| \tilde{\qf} + \delta_0 r^0 \|_h \le C_4 \| \tilde{\qf} \|_h$ with sufficiently small $\delta_0>0$ for $C_3, C_4>0$ independent of $h$, $\lambda$, $\Delta t$. 
By the same argument in the proof of Lemma~\ref{lemma:aux-inf-sup}, there exists $\wb_1 \in \Vbh$ such that
$\| \wb_1 \|_{\Vbh} = \| \tilde{\qf} \|_{0, \laminv}$ and 
% Let $\wb$ be an element in $\Vbh$ such that 
% %
% \algn{ \label{eq:w-weak-inf-sup}
%   \frac{\LRp{\div \wb, \tilde{q}}_{\Omega} }{\| \wb \|_{\Vbh} } \ge C_1 \| \tilde{q} \|_{0,\laminv} - C_2 S(\tilde{q}, \tilde{q})^{\frac 12} 
% }
% %
% by Lemma~\ref{lemma:weak-inf-sup}. By rescaling we can assume that $\| \wb \|_{\Vbh} = \| q \|_{0,\laminv}$ without loss of generality, so we can obtain 
%
\algn{ \label{eq:w-weak-inf-sup3}
  {\LRp{\div \wb_1, \tilde{q}}_{\Omega} } \ge C_1' \| \tilde{q} \|_{0,\laminv}^2 - C_2' S(\tilde{q}, \tilde{q}) 
}
for all $\tilde{\qf} \in \Qh$. Since $\div \Vbh = \Qh^0$, there exists $\wb_2 \in \Vbh$ such that $\div \wb_2 = \lambda^{-1} \tilde{\qf}^0$ and $\| \wb_2 \|_{\Vbh} \le C \| \tilde{\qf}^0 \|_{0,\laminv}$. 

Take $(\vb, q)$ as $\vb = \tilde{\vb} + \delta_1 \wb_1$ and $q = - \tilde{q} - \delta_0 r^0$ with $\delta_0, \delta_1>0$ which will be determined later. Then, 
\algn{ \label{eq:B-intm2}
  \mathcal{B}((\vb, q), (\tilde{\vb}, \tilde{q})) &= \| \tilde{\vb} \|_{\Vbh}^2 + \delta_1 a_{\ub} (\wb_1, \tilde{\vb}) + \delta_1 \LRp{ \alpha \tilde{q}, \div \wb_1 }_{\Omega} \\
  %&\quad + \delta_2 a_{\ub} (\wb_2, \tilde{\vb}) + \delta_2 \LRp{ \alpha \tilde{q}, \div \wb_2 }_{\Omega} \\
  &\quad + \delta_0 \LRp{ \alpha r^0, \div \tilde{\vb} }_{\Omega}  + \| \tilde{q} \|_{0,s_0}^2 + \delta_0 \LRp{ s_0 r^0, \tilde{q}}_{\Omega} \\
  &\quad + S(\tilde{q}, \tilde{q}) + \delta_0 S(r^0, \tilde{q}) + \frac{\Delta t}2 a_p \LRp{\tilde{\qf} + \delta_0 r^0, \qf }.
}
By \eqref{eq:w-weak-inf-sup3}, the parallelogram law, 
\algn{
2 \LRp{\|\tilde{q}\|_{0, \lambda^{-1}}^2 + \|\tilde{q}^0\|_{0, \lambda^{-1}}^2} = \|\tilde{q} + \tilde{q}^0\|_{0, \lambda^{-1}}^2 + \|\tilde{q} - \tilde{q}^0\|_{0, \lambda^{-1}}^2 \ge \|\tilde{q}^c\|_{0, \lambda^{-1}}^2
}
and Lemma~\ref{lemma:discrete-poincare}, we have
\algn{ 
  \notag \delta_1 \LRp{ \alpha \tilde{q}, \div \wb_1}_{\Omega} &\ge \delta_1 C_1'' \| \tilde{q} \|_{0,\laminv}^2 - \delta_1 C_2'' S(\tilde{q}, \tilde{q}) \\
 \notag  &\ge \frac{\delta_1 C_1''}2 \| \tilde{q}^c \|_{0,\laminv}^2 - \delta_1 C_1'' \| \tilde{q}^0 \|_{0,\laminv}^2 - \delta_1 C_2'' S(\tilde{q}, \tilde{q}) \\
 \label{eq:aux-estm1} &\ge  \frac{\delta_1 C_1''}2 \| \tilde{q}^c \|_{0,\laminv}^2 - \delta_1 \LRp{ C_P C_1'' + C_2'' } S(\tilde{q}, \tilde{q}) 
}
with $C_1'', C_2''$ which depend on the lower bound of $\alpha$ but are independent of $h$, $\lambda$, $\Delta t$. By Young's inequality, 
\algn{ \label{eq:aux-estm2}
  \delta_1 | a_{\ub} (\wb_1, \tilde{\vb}) | &\le \frac 14 \| \tilde{\vb} \|_{\Vbh}^2 + \delta_1^2 \| \wb_1 \|_{\Vbh}^2 = \frac 14 \| \tilde{\vb} \|_{\Vbh}^2 + \delta_1^2 \| \tilde{q} \|_{0,\laminv}^2 .
  %\delta_2 | a_{\ub} (\wb_2, \tilde{\vb}) | &\le \frac 14 \| \tilde{\vb} \|_{\Vbh}^2 + \delta_2^2 \| \wb_2 \|_{\Vbh}^2 = \frac 14 \| \tilde{\vb} \|_{\Vbh}^2 + C\delta_2^2 \| \tilde{\qf}^0 \|_{0,\laminv}^2   .
}
%
%
% \algns{
%   \delta_2 \LRp{ \alpha \tilde{\qf}, \div \wb_2 }_{\Omega} = \delta_2 \LRp{ \alpha \tilde{\qf}^c, \laminv \tilde{\qf}^0}_{\Omega} + \delta_2 \LRp{ \alpha \tilde{\qf}^0, \laminv \tilde{\qf}^0 }_{\Omega}
% }
%
Moreover, we can obtain
\algn{ 
  \notag \delta_0 | \LRp{\alpha r^0, \div \tilde{\vb} }_{\Omega} | &\le \delta_0 \| r^0 \|_{0,\laminv} \| \vb \|_{\Vbh} \le C_r \delta_0 \| \tilde{\qf}^0 \|_{0,\laminv} \| \vb \|_{\Vbh} \\
  \label{eq:aux-estm3} &\le C_r^2 \delta_0^2  \| \tilde{\qf}^0 \|_{0,\laminv}^2 + \frac 14 \| \vb \|_{\Vbh}^2 , \\
  \notag \delta_0 | \LRp{ s_0 r^0, \tilde{\qf}}_{\Omega} | &\le C_r^2 \delta_0^2 \| \tilde{\qf}^0 \|_{0,s_0}^2 + \frac 12 \| \tilde{\qf} \|_{0,s_0}^2 \\
   \label{eq:aux-estm4} &\le C_r^2 C_P^2 \delta_0^2 S(\tilde{\qf}, \tilde{\qf}) + \frac 12 \| \tilde{\qf} \|_{0,s_0}^2 , \\
  \label{eq:aux-estm5} \delta_0 | S(r^0, \tilde{\qf}) | &\le C_r \delta_0 S(\tilde{\qf}, \tilde{\qf}) .
}
If we use \eqref{eq:aux-estm1}--\eqref{eq:aux-estm5} to \eqref{eq:B-intm2}, then 
\algns{ 
  \mathcal{B}((\vb, q), (\tilde{\vb}, \tilde{q})) &\ge \frac 12 \| \tilde{\vb} \|_{\Vbh}^2 - \delta_1^2 \| \tilde{\qf} \|_{0,\laminv}^2 + \frac{\delta_1 C_1''}2 \| \tilde{\qf}^c \|_{0,\laminv}^2 \\
  &\quad - \delta_1 \LRp{C_P C_1'' + C_2'' } S(\tilde{\qf}, \tilde{\qf}) \\
  &\quad - C_r^2 \delta_0^2 \| \tilde{\qf}^0 \|_{0,\laminv}^2 - C_r^2 C_P^2 \delta_0^2 \| \tilde{\qf}^0 \|_{0,\laminv}^2 + \frac 12 \| \tilde{\qf} \|_{0,s_0}^2  \\
  &\quad + S(\tilde{q}, \tilde{q}) - C_r \delta_0 S(\tilde{\qf}, \tilde{\qf}) + \frac{C_3 \Delta t}2 \| \tilde{\qf} \|_h^2.
}
Since $\| \tilde{\qf}^c \|_{0,s_0}^2 = \| \tilde{\qf} + (- \tilde{\qf}^0) \|_{0,s_0}^2 \le 2(\| \tilde{\qf}\|_{0,s_0}^2 + \| \tilde{\qf}^0 \|_{0,s_0}^2)$, we have 
\algns{
  \frac 12 \| \tilde{\qf} \|_{0,s_0}^2 \ge \delta_0 \| \tilde{\qf} \|_{0,s_0}^2 \ge \frac{\delta_0}2 \| \tilde{\qf}^c \|_{0,s_0}^2 - \delta_0 \| \tilde{\qf}^0 \|_{0,s_0}^2 
}
if $\delta_0 \le \frac 12$. If we apply this to the previous inequality assuming that $\delta_0 \le \frac 12$, then we have 
\algns{ 
  \mathcal{B}((\vb, q), (\tilde{\vb}, \tilde{q})) &\ge \frac 12 \| \tilde{\vb} \|_{\Vbh}^2 - \delta_1^2 \| \tilde{\qf} \|_{0,\laminv}^2 + \frac{\delta_1 C_1''}2 \| \tilde{\qf}^c \|_{0,\laminv}^2 \\
  &\quad - \delta_1 \LRp{\frac{C_P C_1''}2 + C_2'' } S(\tilde{\qf}, \tilde{\qf}) \\
  &\quad - C_r^2 \delta_0^2 \| \tilde{\qf}^0 \|_{0,\laminv}^2 - C_r^2 C_P^2 \delta_0^2 \| \tilde{\qf}^0 \|_{0,\laminv}^2 + \frac{\delta_0}2 \| \tilde{\qf}^c \|_{0,s_0}^2 - \delta_0 \| \tilde{\qf}^0 \|_{0,s_0}^2  \\
  &\quad + S(\tilde{q}, \tilde{q}) - C_r \delta_0 S(\tilde{\qf}, \tilde{\qf}) + \frac{C_3 \Delta t}2 \| \tilde{\qf} \|_h^2.
}
If we choose a sufficiently small $\delta_0$, $\delta_1$ satisfying
\algns{
  \delta_1 \LRp{ C_P C_1'' + C_2''} + C_r \delta_0 \le \frac 14, \\
  \max \{ \delta_1^2 + C_r^2 \delta_0^2  + C_r^2 C_P^2 \delta_0^2, \delta_0 \} C_P^2 \le \frac 14, 
}
we obtain 
\mltlns{ 
  \mathcal{B}((\vb, q), (\tilde{\vb}, \tilde{q})) 
  \\
  \ge \frac 12 \| \tilde{\vb} \|_{\Vbh}^2 + \frac{\delta_1 C_1''}2 \| \tilde{\qf}^c \|_{0,\laminv}^2 + \frac{\delta_0}2 \| \tilde{\qf}^c \|_{0,s_0}^2 + \frac 12 S(\tilde{\qf}, \tilde{\qf})  + \frac{C_3 \Delta t}2 \| \tilde{\qf} \|_h^2.
}
Since $\| \tilde{\qf}^0 \|_{0,\laminv} + \| \tilde{\qf}^0 \|_{0,s_0} \le C_P S(\tilde{\qf}^0, \tilde{\qf}^0)^{1/2}$, the above inequality gives
\algns{
    \mathcal{B}((\vb, q), (\tilde{\vb}, \tilde{q})) \ge C \| (\tilde{\vb}, \tilde{\qf}) \|_{\mathcal{X}}^2
}
with $C>0$ depending on $C_1''$, $C_2$, $C_3$, $\delta_0$, $\delta_1$.

On the other hand, by the definition of $(\vb, \qf)$, it is easy to check that $\| (\vb, \qf) \|_{\mathcal{X}} \le C \| (\tilde{\vb}, \tilde{\qf}) \|_{\mathcal{X}}$ with $C>0$ depending on $\delta_0$, $\delta_1$. Therefore, the conclusion follows.
\end{proof}

\begin{table}[h]
% The convergence results were collected from the file "poroelas-eg-monolithic-stab-conv.py" with the parameter "add\_stab" set to True. Updated with new experiment on 04/22/2023
\tiny{
	\begin{center}
		\begin{tabular}{c|c|c||cc cc cc cc cc} 
		\hline
			\multicolumn{3}{c||}{} & \multicolumn{2}{c}{$ \| u - u_h \|_{a_{\ub}} / \| u \|_{a_{\ub}} $} & \multicolumn{2}{c}{$ \| u - u_h \|_1 / \| u \|_1 $} & \multicolumn{2}{c}{$ \| u - u_h \|_0 / \| u \|_0$} & \multicolumn{2}{c}{$ \| p - p_h \|_1 / \| p \|_1 $} & \multicolumn{2}{c}{$ \| p - p_h \|_0 / \| p \|_0$}\\
			 $\beta$ & $\nu$ & $N$ & error & rate & error & rate & error & rate & error & rate & error & rate \\ \hline \hline 
			\multirow{8}{*}{1} & \multirow{4}{*}{0.3} & $8$  & 1.1861e-01  &  --  &  1.0361e-01  &  --  &  1.0997e-02 & -- & 1.0178e-01 & -- & 9.9130e-03 & --\\ 
			 & & $16$ & 5.9533e-02  &  0.99  &  5.1796e-02  &  1.00  &  2.8000e-03 & 1.97 & 5.1074e-02 & 0.99 & 2.5883e-03 & 1.94\\ 
			 & & $32$ & 2.9788e-02  &  1.00  &  2.5882e-02  &  1.00  &  7.0630e-04 & 1.99 & 2.5545e-02 & 1.0 & 6.9010e-04 & 1.91\\ 
			 & & $64$ & 1.4895e-02  &  1.00  &  1.2935e-02  &  1.00  &  1.7740e-04 & 1.99 & 1.2767e-02 & 1.0 & 1.7800e-04 & 1.95\\ 
			\cline{3-13} 
            & \multirow{4}{*}{0.499} & $8$  & 1.2029e-01  &  --  &  1.0223e-01  &  --  &  1.1052e-02 & -- & 1.0180e-01 & -- & 1.0211e-02 & --\\
			 & & $16$ & 6.0466e-02  &  0.99  &  5.1133e-02  &  1.00  &  2.8143e-03 & 1.97 & 5.1080e-02 & 0.99 & 2.6653e-03 & 1.94\\ 
			 & & $32$ & 3.0272e-02  &  1.00  &  2.5555e-02  &  1.00  &  7.1040e-04 & 1.99 & 2.5542e-02 & 1.0 & 6.8090e-04 & 1.97\\ 
			 & & $64$ & 1.5141e-02  &  1.00  &  1.2772e-02  &  1.00  &  1.7850e-04 & 1.99 & 1.2767e-02 & 1.0 & 1.7460e-04 & 1.96\\ 
			 \cline{1-13} 
			\multirow{8}{*}{2} & \multirow{4}{*}{0.3} & $8$  & 1.1861e-01  &  --  &  1.0361e-01  &  --  &  1.0997e-02 & -- & 1.0179e-01 & -- & 9.9147e-03 & --\\ 
			 & & $16$ & 5.9533e-02  &  0.99  &  5.1796e-02  &  1.00  &  2.8001e-03 & 1.97 & 5.1082e-02 & 0.99 & 2.5884e-03 & 1.94\\ 
			 & & $32$ & 2.9788e-02  &  1.00  &  2.5882e-02  &  1.00  &  7.0630e-04 & 1.99 & 2.5546e-02 & 1.0 & 6.9010e-04 & 1.91\\ 
			 & & $64$ & 1.4895e-02  &  1.00  &  1.2935e-02  &  1.00  &  1.7740e-04 & 1.99 & 1.2767e-02 & 1.0 & 1.7800e-04 & 1.95\\ 
			\cline{3-13} 
            & \multirow{4}{*}{0.499} & $8$  & 1.2029e-01  &  --  &  1.0223e-01  &  --  &  1.1052e-02 & -- & 1.0181e-01 & -- & 1.0213e-02 & --\\
			 & & $16$ & 6.0466e-02  &  0.99  &  5.1133e-02  &  1.00  &  2.8143e-03 & 1.97 & 5.1080e-02 & 0.99 & 2.6654e-03 & 1.94\\ 
			 & & $32$ & 3.0272e-02  &  1.00  &  2.5545e-02  &  1.00  &  7.1040e-04 & 1.99 & 2.5543e-02 & 1.0 & 6.8090e-04 & 1.97\\ 
			 & & $64$ & 1.5141e-02  &  1.00  &  1.2772e-02  &  1.00  &  1.7850e-04 & 1.99 & 1.2767e-02 & 1.0 & 1.7460e-04 & 1.96\\ 
			 \cline{1-13} 			 
		\end{tabular} 
		\caption{ The convergence results of errors with the manufactured solution \eqref{eq:manufactured-solution} for $\beta = 1,2$ and the Crank--Nicolson scheme with $\Delta t= 1/N$. %were collected from the file "poroelas-eg-monolithic-stab-conv.py" with the parameter "$add\_stab$" set to be True.
}
	\label{table:conv-quadratic}
	\end{center}
}
 \end{table}

\begin{table}[h]
% results from minres solver (ratio = 1)
% Date last modified: 12/30/2021
% Code used: run_precond_aniso.py which utilizes the code: poroelas-eg-monolithic-stab-precond.py
	\begin{center}
		\begin{tabular}{c|c|c||c c c c} 
			\multicolumn{3}{c||}{ } & \multicolumn{4}{c}{$N~(DOFs)$} \\ 
			$\beta$ &$\Delta t$ &$\text{$\nu$}$ &$8~(3201)$ &$16~(12545)$ &$32~(49665)$ &$64~(197633)$ \\ 
			\hline 
			\multirow{6}{*}{$0$} &\multirow{2}{*}{$10^{-1}$} & $ 0.3$ & $148$ & $195$ & $243$ & $307$ \\ 
			%& &$ 0.49$ & $197~(1.34)$ & $264~(4.52)$ & $278~(15.61)$ & $275~(56.81)$ \\ 
			& &$ 0.499$ & $185$ & $192$ & $185$ & $163$ \\
			%& &$ 0.4999$ & $207~(1.36)$ & $217~(3.73)$ & $179~(10.22)$ & $161~(36.84)$ \\ 
			\cline{3-7} 
			& \multirow{2}{*}{$10^{-2}$} & $ 0.3$ & $152$ & $210$ & $276$ & $343$ \\
			%& &$ 0.49$ & $217~(1.42)$ & $284~(4.54)$ & $367~(19.18)$ & $372~(73.55)$ \\
			& &$ 0.499$ & $214$ & $254$ & $252$ & $231$ \\
			%& &$ 0.4999$ & $261~(1.57)$ & $260~(4.3)$ & $243~(13.5)$ & $183~(40.61)$ \\ 
			\cline{3-7} 
			& \multirow{2}{*}{$10^{-3}$} & $ 0.3$ & $152$ & $218$ & $295$ & $391$  \\
			%& &$ 0.49$ & $230~(1.46)$ & $314~(5.05)$ & $404~(21.16)$ & $508~(100.76)$ \\ 
			& &$ 0.499$ & $228$ & $313$ & $333$ & $336$ \\
			%& &$ 0.4999$ & $326~(1.75)$ & $340~(5.3)$ & $320~(17.28)$ & $277~(58.29)$ \\ 
			\cline{1-7}
			\multirow{6}{*}{$1$} &\multirow{2}{*}{$10^{-1}$} & $ 0.3$ & $58$ & $57$ & $57$ & $57$ \\ 
			%& &$ 0.49$ & $84~(0.97)$ & $85~(2.21)$ & $89~(6.29)$ & $89~(22.85)$ \\ 
			& &$ 0.499$ & $91$ & $92$ & $93$ & $98$ \\
			%& &$ 0.4999$ & $146~(1.17)$ & $151~(2.88)$ & $144~(8.75)$ & $149~(32.44)$ \\ 
			\cline{3-7} 
			& \multirow{2}{*}{$10^{-2}$} & $ 0.3$ & $60$ & $60$ & $60$ & $58$ \\
			%& &$ 0.49$ & $87~(0.97)$ & $90~(2.28)$ & $92~(6.59)$ & $92~(24.3)$ \\ 
			& &$ 0.499$ & $94$ & $97$ & $102$ & $102$ \\
			%& &$ 0.4999$ & $151~(1.19)$ & $159~(2.97)$ & $151~(9.08)$ & $160~(36.77)$ \\ 
			\cline{3-7} 
			& \multirow{2}{*}{$10^{-3}$} & $ 0.3$ & $61$ & $62$ & $59$ & $61$ \\
			%& &$ 0.49$ & $89~(0.99)$ & $93~(2.18)$ & $95~(6.5)$ & $94~(23.37)$ \\ 
			& &$ 0.499$ & $97$ & $100$ & $105$ & $106$ \\
			%& &$ 0.4999$ & $155~(1.19)$ & $164~(3.04)$ & $159~(9.59)$ & $166~(36.79)$ \\ 
			\cline{1-7}
			\multirow{6}{*}{$2$} &\multirow{2}{*}{$10^{-1}$} & $ 0.3$ & $39$ & $36$ & $35$ & $33$ \\ 
			%& &$ 0.49$ & $56~(0.86)$ & $56~(1.73)$ & $55~(4.89)$ & $54~(17.23)$ \\ 
			& &$ 0.499$ & $62$ & $61$ & $63$ & $62$ \\
			%& &$ 0.4999$ & $102~(1.06)$ & $101~(2.27)$ & $98~(6.64)$ & $96~(24.42)$ \\ 
			\cline{3-7} 
			& \multirow{2}{*}{$10^{-2}$} & $ 0.3$ & $39$ & $38$ & $36$ & $35$ \\
			%& &$ 0.49$ & $58~(0.88)$ & $57~(1.75)$ & $56~(4.85)$ & $56~(17.59)$ \\ 
			& &$ 0.499$ & $63$ & $63$ & $64$ & $64$ \\
			%& &$ 0.4999$ & $105~(1.03)$ & $104~(2.35)$ & $102~(7.05)$ & $100~(24.67)$ \\ 
			\cline{3-7} 
			& \multirow{2}{*}{$10^{-3}$} & $ 0.3$ & $40$ & $39$ & $38$ & $36$ \\
			%& &$ 0.49$ & $59~(0.88)$ & $59~(1.76)$ & $59~(4.94)$ & $57~(17.99)$ \\ 
			& &$ 0.499$ & $65$ & $64$ & $66$ & $65$ \\
			%& &$ 0.4999$ & $106~(1.03)$ & $106~(2.34)$ & $104~(7.24)$ & $102~(25.93)$ \\ 
%			\cline{1-7}
%			\multirow{12}{*}{$3$} &\multirow{4}{*}{$10^{-1}$} & $ 0.3$ & $35~(26.29)$ & $34~(1.49)$ & $33~(3.83)$ & $32~(12.94)$  \\ 
%			& &$ 0.49$ & $52~(26.58)$ & $51~(1.75)$ & $53~(4.7)$ & $53~(17.2)$ \\ 
%			& &$ 0.499$ & $57~(26.54)$ & $58~(1.76)$ & $60~(5.02)$ & $61~(17.87)$ \\
%			& &$ 0.4999$ & $93~(26.78)$ & $96~(2.24)$ & $96~(6.59)$ & $95~(24.0)$ \\ 
%			\cline{3-7} 
%			& \multirow{4}{*}{$10^{-2}$} & $ 0.3$ & $36~(26.8)$ & $35~(1.48)$ & $35~(3.89)$ & $33~(13.63)$ \\
%			& &$ 0.49$ & $52~(27.04)$ & $53~(1.71)$ & $55~(4.75)$ & $55~(16.99)$ \\ 
%			& &$ 0.499$ & $57~(26.33)$ & $60~(1.79)$ & $61~(5.0)$ & $62~(18.35)$ \\
%			& &$ 0.4999$ & $95~(26.42)$ & $98~(2.24)$ & $100~(6.75)$ & $98~(25.26)$ \\ 
%			\cline{3-7} 
%			& \multirow{4}{*}{$10^{-3}$} & $ 0.3$ & $36~(26.49)$ & $35~(1.47)$ & $35~(3.88)$ & $35~(13.75)$ \\
%			& &$ 0.49$ & $54~(26.5)$ & $55~(1.72)$ & $56~(4.86)$ & $56~(17.8)$ \\ 
%			& &$ 0.499$ & $59~(27.45)$ & $60~(1.78)$ & $63~(5.11)$ & $64~(18.4)$ \\
%			& &$ 0.4999$ & $97~(27.18)$ & $100~(2.29)$ & $101~(6.94)$ & $100~(24.74)$  \\ 
			\hline 
	\label{table:scalar-kappa-precond} 
		\end{tabular} 
		\caption{The numbers of iteration for $\beta = 0,1,2$ of the MinRes algorithm with abstract preconditioners of the form \eqref{eq:preconditioner}. See Section~\ref{sec:numerics} for more details on implementation. }
	\end{center} 
\end{table}

{

\section{Numerical results}
\label{sec:numerics}

In this section we present numerical results which support our theoretical findings on error analysis and preconditioning. All numerical experiments are done with the Firedrake package \cite{Firedrake:main}. 

In our numerical experiments we set $\Omega = [0,1]\times [0,1]$ with $\Gamma_{d} = \{0\} \times [0,1]$ and $\Gamma_p = \partial \Omega$. We use the structured meshes obtained by dividing $\Omega$ into $N \times N$ subsquares and the dividing each subsquare into two triangles. For convergence rate tests we refine the meshes with $N=8, 16, 32, 64$.  

To check convergence rates of the errors of numerical and exact solutions, we consider a manufactured solution with 
\begin{subequations}
\label{eq:manufactured-solution}
\algn{
	\ub &= \LRp{- \pi x^2 \LRp{1-x}^2 \sin^2 \LRp{\pi y} \cos \LRp{2t}, - \pi y^2 \LRp{1-y}^2 \sin^2 \LRp{\pi x} \cos \LRp{2t}} ,
	\\ 
	\pf &= x \LRp{1-x} y \LRp{1-y} \cos \LRp{t} .
}
\end{subequations}
Note that $\div \bs{u} \not = 0$, so the $L^2$ norm of $\bs{f}$ may grow unboundedly as $\lambda \rightarrow +\infty$. In order to illustrate convergence rates of the errors independent of $\lambda$, we compute the relative errors measured by
\gats{
	\| \bs{u} - \bs{u}_h \|_{a_{\ub}} / \| \ub \|_{a_{\ub}}, \quad  \| \ub - \ub_h \|_1 / \| \ub \|_1, \quad  \| \ub - \ub_h \|_0 / \| \ub \|_0,
	\\
	\| p - p_h \|_1 / \| p \|_1, \quad \| p - p_h \|_0 / \| p \|_0 .
}
In Table~\ref{table:conv-quadratic} we present the results for $\beta = 1,2$ cases with the Crank--Nicolson time discretization with time step size $\Delta t = 1/N$. For all relative error quantities and for $\nu = 0.3, 0.499$ we observe that optimal convergence rates are obtained. 

For preconditioning we also present preconditioning test results. We compare the results for all combinations of $\beta=0,1,2$, $\nu = 0.3, 0.499$, $\Delta t= 10^{-1},10^{-2},10^{-3}$.  We use preconditioners which have the abstract form \eqref{eq:preconditioner}. For implementation of $\mathcal{P}_{\bs{V}_h}$ we use the geometric multigrid preconditioner developed in \cite{Aznaran-Farrell-Kirby:2021}. For implementation of $\mathcal{P}_{\tilde{Q}_h}$ we use the preconditioners proposed in \cite{Lee-Lee-Wheeler-16,Lee-Ghattas} combining algebraic multigrid preconditioners for $Q_h^c$ and the block Jacobi preconditioner for $Q_h^0$. Numbers of iterations are given in Table~\ref{table:scalar-kappa-precond}. One can observe that $\beta=0$ case, the original enriched Galerkin method for $p$, shows strong dependence on mesh refinement and $\nu \approx 0.5$. For $\beta=1$, the dependence on mesh refinement disappears but the numbers of iteration still show some dependence for $\nu \approx 0.5$. For $\beta=2$, the numbers of iteration are $\approx 65$. We remark that these numbers are usually regarded as optimal iteration numbers in parameter-robust preconditioning with MinRes algorithm \cite{LeeEtAl2017}. The iteration numbers become much smaller in practice if GmRes algorithm is used but the norm equivalence discussion in Section~\ref{sec:preconditioning} is not valid for GmRes, so we do not include GmRes in this paper.

\section{Conclusion}
\label{sec:conclusion}
In this paper we developed a new discretization for poroelasticity. We use the primal formulations for the elastic displacement and for the pore pressure minimizing the number of variables. 
In our discretization with nonconforming and the interior over-stabilized enriched Galerkin methods, numerical solutions are robust for nearly incompressible elasticity parameters, are locally mass conservative. Furthermore, we showed a theoretical discussion how to construct robust preconditioinors and the robust preconditioner construction method was verified by numerical experiments. 

\section*{Statements and Declarations}
\noindent{\bf Funding} Jeonghun J. Lee is partially supported by Baylor's Undergraduate Research Council grant, Baylor's start-up grant, and National Science Foundation (DMS-2110781) %, Jacob Moore is partially supported by Baylor's Undergraduate Research Council grant 

\noindent{\bf Conflict of Interest} The authors declared that they have no conflict of interest.

\providecommand{\bysame}{\leavevmode\hbox to3em{\hrulefill}\thinspace}
\providecommand{\MR}{\relax\ifhmode\unskip\space\fi MR }
% \MRhref is called by the amsart/book/proc definition of \MR.
\providecommand{\MRhref}[2]{%
  \href{http://www.ams.org/mathscinet-getitem?mr=#1}{#2}
}
\providecommand{\href}[2]{#2}

%\bibliography{references}
%\bibliographystyle{amsplain}

\end{document}